\def\ff{{\mathcal F}}
\def\hh{{\mathcal H}}
\def\lll{{\mathcal L}}
\def\ffi{\varphi}
\def\eps{\varepsilon}
\def\dst{\displaystyle}
\renewcommand{\Im}{\mathrm{Im}\,}
\def\C{{\mathbb{C}}}
\def\R{{\mathbb{R}}}
\def\T{{\mathbb{T}}}
\def\Z{{\mathbb{Z}}}
\def\tree{{\mathbf{T}}}
\def\d{\,{\mathrm{d}}}
\newcommand{\norm}[1]{{\left\|{#1}\right\|}}
\newcommand{\ent}[1]{{\left[{#1}\right]}}
\newcommand{\abs}[1]{{\left|{#1}\right|}}
\newtheorem{lemma}{Lemma}[section]
\newtheorem{proposition}[lemma]{Proposition}
\newtheorem{theorem}[lemma]{Theorem}
\newtheorem{corollary}[lemma]{Corollary}
\newtheorem{theoreml}{Theorem}
\newtheorem{corl}[theoreml]{Corollary}
\theoremstyle{definition}
\newtheorem{definition}[lemma]{Definition}
\theoremstyle{remark}
\newtheorem{remark}[lemma]{Remark}
\begin{document}

\title[Schr\"odinger equation on trees]
{ Uniqueness for solutions of the Schr\"odinger equation on trees}

\begin{abstract}
We prove that if a solution of the time-dependent Schr\"oding\-er equation on an homogeneous tree with bounded potential decays fast at 
two distinct times then the solution is trivial. For the free Schr\"odinger operator, we use the spectral theory of the Laplacian
and complex analysis and obtain a characterization of the initial conditions that lead to a sharp decay at any time.
We then adapt the real variable methods first introduced by Escauriaza, Kenig, Ponce and Vega to establish a general sharp result in the case of bounded potentials.
\end{abstract}

\author{Aingeru Fern\'{a}ndez-Bertolin}
\address{(A. Fern\'andez) UPV/EHU, Dpto. Matem\'aticas, Barrio Sarriena s/n 48940 Leioa, Spain.}
\email{aingeru.fernandez@ehu.eus}

\author{Philippe Jaming}
\address{(Ph. Jaming) Univ. Bordeaux, CNRS, Bordeaux INP, IMB, UMR 5251,  F-33400, Talence, France.}
\email{Philippe.Jaming@math.u-bordeaux.fr}

\keywords{Schr\"odinger equation; Bethe lattice; homogeneous trees; Uncertainty Principle}

\subjclass[2010]{05C05;31C20;39A12;81Q10}

\maketitle

\section{Introduction}

The aim of the present paper is to study uniqueness results for Schr\"odinger equations with bounded potentials on homogeneous trees. 
These results can be seen as a version for homogeneous trees of a dynamical interpretation of the Hardy Uncertainty Principle.

The Schr\"odinger equation $i\partial_t u=\Delta u +Vu$ has been extensively studied by mathematicians and physicists.
Those studies take place in various underlying spaces, both continuous ($\R^d$, manifolds,...) and discrete. In the discrete setting,
on $\Z^d$, and on the homogeneous tree (also known as a {\em Bethe lattice} in the physics community) this equation has first
been considered by Anderson and collaborators
in \cite{An,ACTA} in order to describe the behavior of a quantum particle in disordered medium.

Our aim here is to further investigate properties of solutions of the Schr\"odinger equation on an homogeneous tree.
We will prove that solutions of the Schr\"odinger equation $i\partial_t u=\Delta u +Vu$
on an homogeneous tree can not be too sharply localized at 2 different times when the potential $V$ is bounded.
Our results may thus be seen as a dynamical version of the Uncertainty Principle.
Before outlining our results more precisely, let us first explain what we mean by ``localizing'' and further
explain our motivations in this paper.

Let us start by recalling Hardy's uncertainty principle \cite{ha} on the real line: assume $f\in L^2(\R)$ satisfies
a decrease property like
\begin{equation}
\label{eq:pointwiseHardy}
|f(x)|\le C e^{-x^2/\beta^2},\ |\hat{f}(\xi)|\le C e^{-4\xi^2/\alpha^2}.
\end{equation}
Then, if $\alpha\beta<4$, $f\equiv0$ while, in the end-point case, $\dst\frac{1}{\alpha\beta}=\frac14,\ f=C e^{-x^2/\beta^2}$.
In other words, a function and its Fourier transform can not both be localized below two sharply localized Gaussians.

Numerous authors have extended this result to higher dimensions, replacing the point-wise estimate \eqref{eq:pointwiseHardy}
by integral or even distributional conditions ({\it see e.g.} the works of H\"ormander, Bonami, Demange and the second author
\cite{Ho,bdj,BD,De})
and also replacing the underlying space $\R^d$ by various Lie groups (as can be found for instance in
the work of Baklouti, Kaniuth, Sitaram, Sundari, Thangavelu,... including \cite{BK1,BK2,sst,Th1,Th2}). The survey \cite{FS} and the books \cite{HJ,Th3} may be taken as a starting point to further investigate the subject.
Most of this work requires either complex analysis or a reduction to a real variable setting in which complex variable tools are available.
A first difficulty appears here as the decrease in the space variable and in the Fourier variable can no longer be measured in the same
way. This problem becomes even more striking in the discrete setting. For instance, for functions on $\Z$, the Fourier transform
is a periodic function, so that there is no decrease at infinity.

To overcome this, one way is to consider a dynamical interpretation of the uncertainty principle.
To explain what we mean by this, let us go back to the real line.
Recall that the solution of the free Schr\"odinger equation $i\partial_tu=\Delta u$, $u(0,x)=u_0(x)$
is given by the following representation formula:
\begin{multline*}
u(x,t)=(4\pi i t)^{-n/2}\int_{\mathbb{R}^n}e^{\frac{-i|x-y|^2}{4t}}u_0(y)\,dy\\
=(2\pi it)^{-n/2}e^{\frac{-i|x|^2}{4t}}\widehat{e^{-i\frac{|\cdot|^2}{4t}}u_0}\left(-\frac{x}{2t}\right).
\end{multline*}
Hence, the solution at a fixed time has, roughly speaking, the same size as the Fourier transform of the initial data, and we can translate decay properties of $u_0$ and $\widehat{u_0}$ into decay properties of $u_0$ and $u(x,T)$ for a fixed time $T$, to have
$$
|u_0(x)|\le C e^{-x^2/\beta^2},\ |u(x,T)|\le C e^{-x^2/\alpha^2},\ \frac{T}{\alpha\beta}>\frac14\Longrightarrow u\equiv 0
$$
and, if $\dst\frac{T}{\alpha\beta}=\frac14,\ u_0(x)=C e^{-x^2(1/\beta^2+i/4T)}$.

This point of view has been used by Chanillo \cite{Ch} to prove a dynamical uncertainty principle
on complex semi-simple Lie groups by reducing the problem to Hardy's Uncertainty Principle on the real line.
At the same time, Escauriaza, Kenig, Ponce and Vega started a series of papers \cite{ekpv1,ekpv2,ekpv3}
were they provide the first proof of Hardy's Uncertainty Principle in its dynamical version in the presence of a potential,
using real calculus. 
Their motivation is to consider solutions of general linear Schr\"odinger equations $i\partial_tu=\Delta u +Vu$,
only assuming size conditions for the space and time-dependent potential $V$. The robustness of both methods allows to extend their results
to different settings, such as for covariant Schr\"odinger evolutions by
Barcel\'o, Cassano, Fanelli, Guti\'errez, Ruiz, Vilela \cite{bfgrv,cf}, or heat evolutions \cite{ekpv4}
but also to other underlying spaces, {\it see e.g.} the work of Ben Sa\"id, Dogga, Ludwig, M\"uller, Pasquale, Sundari, Thangavellu \cite{BSTD,LuMu,PS}.

More recently, independently in \cite{fb,fbv,jlmp}, together with Lyubarskii, Malinnikova, Perfekt and Vega, we began to extend the previous results to the discrete setting, 
understanding the Laplace operator as a finite-difference operator, acting on complex-valued functions
$f:\mathbb{Z}\rightarrow \mathbb{C}$,
$$
\Delta_d f(n):=f(n+1)+f(n-1)-2f(n).
$$

For the free evolution, or for the linear evolution with a bounded time-independent potential, as shown in \cite{lm}, 
one can use complex analysis tools, more precisely refined versions of the Phragm\'en-Lindel\"of principle,
to give a discrete version of the Hardy Uncertainty Principle. As in the continuous case,
the critical decay is given by the discrete heat kernel, given in terms of modified Bessel functions. However, this similarity
leads also to the main difference between both settings, because the critical decay is not Gaussian. More precisely,
it is shown in \cite{jlmp} that for $0<\alpha<1$ and $u$ a $C^{1}([0,1],\ell^2(\mathbb{Z}))$-solution of $\partial_tu=i\Delta_du$ (a so-called \textit{strong} solution), if $u$ satisfies the estimate
\begin{equation}
\label{eq:besselest}
|u(n,0)|+|u(n,1)|\le C I_n(\alpha) \sim \frac{C}{\sqrt{|n|}}\left(\frac{e\alpha}{2|n|}\right)^{|n|},\ n\in\mathbb{Z}\setminus\{0\},
\end{equation}
then $u\equiv0$. In the end-point case, $\alpha=1$, $u(n,t)=\gamma i^{-n}e^{-2it}J_n(1-2t)$, where $\gamma$
is a constant and $J_n$ is the Bessel function.
Note that classical estimates of Bessel functions show that, for any $\gamma>0$, there is a $C>0$ such that this solution
indeed satifies \eqref{eq:besselest}. This argument is also extended to other type of problems, as shown by Alvarez-Romero
and Teschl \cite{ar,art} for Jacobi operators.

In the case of linear Schr\"odinger equations, one can give a dynamical version of the Hardy Uncertainty Principle, only assuming that 
the potential is bounded, which makes another difference with the continuous case, since in the continuous case,
all results in \cite{ekpv1,ekpv2,ekpv3} require to have some 
decay in the potential, and the result is still open for bounded potentials.
To be more precise, the first author and Vega \cite{fbv} showed that if $u$ is a strong solution of 
$\partial_tu=i(\Delta_d u +Vu)$ on $\Z$ (with $V=V(n,t)$ bounded) and if $u$ satisfies the decay condition
\begin{equation}
\label{eq:decayZ}
\sum_{n\in\mathbb{Z}}e^{2\mu(|n|+1)\log(|n|+1)}(|u(n,0)|^2+|u(n,1)|^2)<\infty
\end{equation}
for some $\mu>1$, then $u=0$. In view of the free case, as $u(n,t)=\gamma i^{-n}e^{-2it}J_n(1-2t)$ is a solution of the free Schr\"odinger equation and it satisfies \eqref{eq:decayZ} with $\mu=1+\epsilon,\ \forall \epsilon>0$ (as one can deduce from \eqref{eq:besselest}), the condition $\mu>1$ is optimal. It is worth to mention that $\mu=1$ gives the leading term in the asymptotic expression for $I_n(\alpha)$ in \eqref{eq:besselest}.
Note also
that \cite{jlmp,fb} both contain similar results but only in non-optimal cases $\mu>\mu_0>1$.
A higher dimensional version of this result can be found in \cite{fbv}, although the rate of 
decay $\mu$ obtained there depends on the dimension and the sharp result is still open.
The key tool here is to establish Carleman type estimates, that is, a weighted inequality of the form
$C_w\norm{w u}_{L^2(\Z^d)}\leq \norm{w(i \partial_t +\Delta_d)u}_{L^2(\Z^d)}$ for an appropriate weight $w$
and a constant $C_w$ depending on this weight. We refer to \cite{LR} for more on Carleman estimates and their use in the continuous setting.

Therefore the results in \cite{fbv,jlmp,lm} are based on two different approaches. For the linear evolution with time-independent bounded potential one uses complex analysis, while in the presence of a time-dependent bounded potential the Phragm\'en-Lindel\"of principle is not available and one replace this by a suitable Carleman inequality (using real variable methods instead of complex analysis).

\smallskip

In this paper we extend both approaches to homogeneous trees of degree $q+1$ ({\em Bethe lattices}), which we denote by $\tree_q$. 
This is a connected graph with no loops, rooted in a point denoted by $o$, where every vertex is adjacent to $q+1$ other vertices, 
a relation denoted by $y\sim x$.
Thus, one can see $\tree_q$ as a natural extension of the line $\mathbb{Z}$, which can be seen as a homogeneous tree of degree $2$.
One may then ask whether the behavior for solutions of Schr\"odinger evolutions is similar on $\Z$ and on $\tree_q$. 
As in the line $\Z$, we understand the 
Laplacian as the combinatorial Laplacian, that is
a finite-difference operator $\mathcal{L}$ only taking into account interactions between nearest-neighbors
(see Section 2 for a precise definition). 

It is our aim here to contribute to the understanding of the behavior
of solutions of Schr\"odinger equations on trees ({\it see e.g.} the recent papers by Anantharaman, Colin de Verdi\`ere, Eddine,
Sabri, Truc \cite{AS,Ed,CdVT} for other directions)
by establishing Uncertainty Principles on trees (so far, we are only aware of one article by Astengo \cite{Ast} dealing with that
issue).

We are now in position to describe our results. First, since the spectral theory of the Laplacians on homogeneous trees is known 
({\it see} Cowling and Setti \cite{cs}), we have all the ingredients to give a dynamic interpretation of the Hardy Uncertainty Principle on $\tree_q$
when there is no potential:

\begin{theoreml}
\label{thA}
There exists a function $U_q$ on $\tree_q$ such that, if $u$ is a strong solution of the equation 
$$
i\partial_tu(x,t)=\mathcal{L}u(x,t)=u(x,t)-\frac{1}{q+1}\sum_{y\sim x}u(y,t),\ \ x\in\tree_q
$$
with $u(x,0)=u_0(x)$ and if at times $t_0=0$ and $t_1=1$, there is a $\kappa$ such that, for $x\not=o$
\begin{equation}
\label{eq:thmA}
|u(x,t_i)|\le \frac{\kappa}{\sqrt{|x|}}\left(\frac{e}{2(q+1)|x|}\right)^{|x|}
\end{equation}
then $u_0=\gamma U_q$ for some $\gamma\in\C$.
\end{theoreml}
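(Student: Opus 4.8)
The plan is to reduce the problem to a question of complex analysis via the spectral decomposition of the Laplacian $\mathcal{L}$ on $\tree_q$. First I would use the spherical Fourier analysis on homogeneous trees (following Cowling--Setti \cite{cs}): the radial part of the spectral measure of $\mathcal{L}$ is supported on an interval $[a,b]\subset\R$ (coming from the $\ell^2$-spectrum of $\mathcal{L}$, which is $[1-\frac{2\sqrt q}{q+1},1+\frac{2\sqrt q}{q+1}]$), and a radial $\ell^2$ function $f$ on $\tree_q$ has a ``spherical transform'' $\hat f(\lambda)$ defined through the spherical functions $\phi_\lambda$. Since the quantity $U_q$ in \eqref{eq:thmA} is, up to the $\kappa/\sqrt{|x|}$ normalization, essentially the $|x|$-th coefficient of a generating function, I expect $U_q$ to be the radial function whose spherical transform is (a constant times) the natural ``heat-kernel-at-time-one'' weight $e^{-i\lambda}$ against the Plancherel measure, i.e. $U_q(x) = c\int e^{-i\mu(\lambda)} \phi_\lambda(x)\,d\mathbf{m}(\lambda)$ where $\mu(\lambda)$ is the symbol of $\mathcal{L}$. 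The first real step is therefore to record this explicitly and to verify, via the known asymptotics of Bessel-type integrals / the generating function $\sum_n U_q(n) z^n$, that $U_q$ itself satisfies \eqref{eq:thmA} with equality (up to the constant), so the statement is sharp.

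Next, I would write the solution at time $t$ via the spectral theorem: $u(x,t)=\big(e^{-it\mathcal{L}}u_0\big)(x)$, so that on the spectral side $\widehat{u(\cdot,t)}(\lambda)=e^{-it\mu(\lambda)}\widehat{u_0}(\lambda)$. The decay hypothesis \eqref{eq:thmA} at $t_0=0$ has to be translated into a statement about $\widehat{u_0}$: because the spherical functions $\phi_\lambda(x)$ grow only polynomially in $|x|$ on the spectrum but the coefficients in \eqref{eq:thmA} decay super-exponentially (like $(|x|)^{-|x|}$), the series $\sum_x u_0(x)\phi_\lambda(x)$ converges not just for real $\lambda$ but for $\lambda$ in a full neighborhood of the spectrum in $\C$ — indeed in a horizontal strip — and extends to an entire function with controlled growth. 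Concretely, the decay rate $\big(\frac{e}{2(q+1)|x|}\big)^{|x|}$ is exactly matched to the exponential type of the spherical functions $\phi_\lambda$ as $\Im\lambda\to\infty$, so that $\widehat{u_0}$ and likewise $\widehat{u(\cdot,1)}=e^{-i\mu(\lambda)}\widehat{u_0}$ are entire of the appropriate finite exponential type.

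The core of the argument is then a Phragmén--Lindelöf / Hardy-type rigidity statement: I have an entire function $F(\lambda)=\widehat{u_0}(\lambda)$ such that both $F$ and $e^{-i\mu(\lambda)}F$ are of exponential type at most the critical value in the imaginary direction, and are $L^2$ (with respect to Plancherel measure) on the real spectrum. Since $\mu(\lambda)=1-\frac{2\sqrt q}{q+1}\cos(\text{something})$ is, after the standard change of variables parametrizing the tree's spectrum, essentially $\mu = 1 - \frac{2\sqrt q}{q+1}\cos\theta$ with $\lambda\leftrightarrow\theta$, multiplying by $e^{-i\mu(\lambda)}$ shifts the exponential type by exactly the ``length-one'' amount in a direction that, combined with the type bound coming from $t_0=0$, forces $F$ to saturate the Phragmén--Lindelöf inequality. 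By the equality case of Phragmén--Lindelöf (or of the Hadamard three-lines theorem in the relevant strip), $F$ must be a constant multiple of the extremal function, namely $F(\lambda)=\gamma\,e^{-i\mu(\lambda)/2}$ or whatever the precise extremizer is, which is exactly $\widehat{\gamma U_q}$; the $L^2$ normalization then gives $|\gamma|\le\kappa$. I expect the main obstacle to be this last step: carrying out the Phragmén--Lindelöf argument cleanly on the tree requires an honest understanding of the behavior of the spherical functions $\phi_\lambda(x)$ for complex $\lambda$ (their exact exponential type in $\Im\lambda$, uniformly in $|x|$, including the $c$-function factors that appear in the Cowling--Setti expansion $\phi_\lambda = c(\lambda)\phi^+_\lambda + c(-\lambda)\phi^-_\lambda$), and in matching the resulting type constants precisely to the numerology $\big(\frac{e}{2(q+1)|x|}\big)^{|x|}$ so that the critical threshold $t_1=1$ is the sharp one. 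A secondary technical point is justifying the interchange of summation and the spectral integral, and checking that a strong solution indeed has $u(\cdot,t)$ given by $e^{-it\mathcal{L}}u_0$ with $u_0\in\ell^2$; this should follow from the boundedness of $\mathcal{L}$ on $\ell^2(\tree_q)$.
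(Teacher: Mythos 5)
Your overall strategy --- converting the super-exponential decay at $t=0$ and $t=1$ into exponential-type bounds for a Fourier-side object, then using the time-one multiplier $e^{-i\mu(\lambda)}$ together with a Phragm\'en--Lindel\"of rigidity argument to pin the transform down to an explicit extremal function --- is exactly the skeleton of the paper's proof. The concrete gap is in your choice of transform. You work with the \emph{spherical} transform $\sum_x u_0(x)\phi_\lambda(x)$, which only sees the radialization of $u_0$: any function whose averages over the spheres $S_n$ all vanish (for instance a function supported on $S_1$ whose values sum to zero) has identically vanishing spherical transform. Since the conclusion of the theorem is that $u_0$ equals the \emph{radial} function $\gamma U_q$, part of what must be proved is precisely that the non-radial component of $u_0$ is zero, and your argument is structurally blind to that component; as written it can at best show that the radialization of $u_0$ equals $\gamma U_q$.

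The paper closes this gap by running the same complex-analytic argument with the full Helgason--Fourier transform $\ff_\tree[u](s,w)$, indexed by a boundary point $w\in\partial\tree$, which is injective on $\ell^2(\tree_q)$. For each fixed $w$ the transform is decomposed along the horocycles $\hh_k^w$, giving two Laurent tails $\phi_j^\pm(\xi,w)$ in the variable $\xi=q^{-is}$; the hypothesis \eqref{eq:thmA} combined with the counting function $\psi_{\ell,k}=|S_\ell\cap\hh_k^w|$ yields the type bound $|\phi_j^\pm(\xi,w)|\le C_q\kappa e^{\sigma|\xi|}$ with $\sigma=\frac{\sqrt q}{2(q+1)}$, and the indicator-diagram and Phragm\'en--Lindel\"of arguments (carried out at the intermediate time $t=1/2$) force $\tilde u(s,w,0)=C_w\exp\bigl(i(1-\sigma(q^{is}+q^{-is}))\bigr)$ for each $w$. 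The crucial extra step, unavailable on the spherical side, is to evaluate the entire extension at $\xi=\sqrt q$ and observe that $C_w$ is a fixed multiple of $\sum_x u_0(x)$, hence independent of $w$; only then does the inversion formula produce a radial $u_0$. If you wish to keep the spherical-function framework, you would need a separate argument killing the non-radial part of $u_0$, which in practice amounts to passing to the boundary-indexed transform anyway. The remaining vagueness in your sketch (the precise extremizer, the uniform control of $\phi_\lambda(x)$ for complex $\lambda$) is repairable, but the radiality issue is a genuine missing idea.
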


The function $U_q$ is explicitly given by an integral formula, {\it see} below.
In order to compare our results with the case of $\Z$, let us rewrite \eqref{eq:thmA} as
$$
|u(x,t_i)|\le \kappa |x|^{-1/2} e^{\bigl(1-\ln 2(q+1)\bigr)|x|} e^{-|x|\ln |x|}.
$$
We thus see that the main term $e^{-|x|\ln |x|}$ does not depend on the tree and is the same as for $\Z$ and
that the dependence on the degree of the tree is rather mild. It is somewhat unexpected that the behavior is the same
in both cases as the tree is the Caley-graph of the free group which is non-amenable and has exponential growth while
$\Z$ is amenable and has polynomial growth.

Further, as an immediate corollary, we obtain
\begin{corl}
\label{corB}
Let $\mu>1$. If $u$ is a strong solution of the equation 
$$
i\partial_tu(x,t)=\mathcal{L}u(x,t)=u(x,t)-\frac{1}{q+1}\sum_{y\sim x}u(y,t),\ \ x\in\tree_q
$$
with $u(x,0)=u_0(x)$ and if at times $t_0=0$ and $t_1=1$, 
\begin{equation}
\label{eq:corB}
\sum_{x\in\tree_q}e^{2\mu|x|\log(|x|+1)}\big(|u(x,0)|^2+|u(x,1)|^2\big)<+\infty
\end{equation}
then $u\equiv0$.
\end{corl}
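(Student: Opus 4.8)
The plan is to deduce Corollary~\ref{corB} directly from Theorem~\ref{thA}. The only work is to check that the weighted $\ell^2$-hypothesis \eqref{eq:corB} forces the pointwise bound \eqref{eq:thmA}, so that Theorem~\ref{thA} applies, and then to observe that the exceptional initial datum $U_q$ it produces is itself incompatible with \eqref{eq:corB} as soon as $\mu>1$.

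\textbf{Step 1: from \eqref{eq:corB} to a pointwise bound.} Let $M$ denote the (finite) value of the left-hand side of \eqref{eq:corB}. Bounding the sum by a single term, we get for $i=0,1$ and every $x\in\tree_q$
$$
|u(x,t_i)|\le \sqrt{M}\,e^{-\mu|x|\log(|x|+1)}.
$$
\textbf{Step 2: comparison with \eqref{eq:thmA}.} Rewrite the right-hand side of \eqref{eq:thmA} as $\kappa|x|^{-1/2}e^{(1-\ln 2(q+1))|x|}e^{-|x|\ln|x|}$. Since $\mu>1$,
$$
\mu|x|\log(|x|+1)-|x|\ln|x|=(\mu-1)|x|\log(|x|+1)+O(|x|),
$$
which grows faster than any multiple of $|x|$; hence $|x|^{1/2}\bigl(2(q+1)|x|/e\bigr)^{|x|}e^{-\mu|x|\log(|x|+1)}\to 0$ as $|x|\to\infty$, and it is finite for the finitely many small values of $|x|$. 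Taking $\kappa$ to be $1$ plus the supremum over $x\ne o$ and $i\in\{0,1\}$ of $|u(x,t_i)|\,|x|^{1/2}\bigl(2(q+1)|x|/e\bigr)^{|x|}$ — which is finite by the above and Step~1 — the bound \eqref{eq:thmA} holds at $t_0=0$ and $t_1=1$. Theorem~\ref{thA} then gives $u_0=\gamma U_q$ for some $\gamma\in\C$.

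\textbf{Step 3: ruling out $\gamma\ne 0$.} Since $U_q$ has exactly the decay rate \eqref{eq:thmA}, there are $c>0$ and $R$ with $|U_q(x)|\ge c|x|^{-1/2}\bigl(e/(2(q+1)|x|)\bigr)^{|x|}$ for $|x|\ge R$. As the number of vertices of $\tree_q$ at distance $n$ from $o$ is comparable to $q^{n}$, we obtain
$$
\sum_{x\in\tree_q}e^{2\mu|x|\log(|x|+1)}|U_q(x)|^2\ \gtrsim\ \sum_{n\ge R} n^{-1}\exp\Bigl(2(\mu-1)n\log(n+1)+O(n)\Bigr)=+\infty,
$$
the exponent tending to $+\infty$ because $\mu>1$; the exponential volume growth of the tree contributes only the $O(n)$ term, which is dominated by the logarithmic gain, so the critical exponent remains $\mu=1$ as on $\Z$. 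Consequently, if $\gamma\ne 0$, then $\sum_{x}e^{2\mu|x|\log(|x|+1)}|u(x,0)|^2=|\gamma|^2\cdot(+\infty)$ contradicts \eqref{eq:corB}. Hence $\gamma=0$, i.e. $u_0\equiv 0$, and by uniqueness of strong solutions (Section~2) $u\equiv 0$.

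There is no genuine obstacle here beyond bookkeeping; the only point deserving care is the last display, namely confirming that $U_q$ fails \eqref{eq:corB} exactly when $\mu>1$. This uses both the lower bound on $|U_q|$ afforded by the phrase ``exactly the rate of decay \eqref{eq:thmA}'' and the count of vertices at each distance, and it is the sole place where the (non-amenable, exponentially growing) geometry of $\tree_q$ intervenes — yet, as already observed after Theorem~\ref{thA}, it does not shift the threshold.
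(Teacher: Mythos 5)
Your proof is correct and follows essentially the route the paper intends: Corollary B is stated there as an immediate consequence of Theorem A, and your Steps 1--2 supply exactly the pointwise bound needed to invoke it, since $e^{-\mu|x|\log(|x|+1)}$ with $\mu>1$ is eventually dominated by $|x|^{-1/2}\bigl(e/(2(q+1)|x|)\bigr)^{|x|}$. Your Step 3 (ruling out $\gamma\neq 0$ via the sharp decay of $U_q$) is legitimate given the paper's assertion that $U_q$ decays exactly at the rate \eqref{eq:thmA}; the paper's Corollary 3.2 (the $(2+\epsilon)$-strengthened version, whose hypothesis your Step 1 bound also verifies) would let you reach $u\equiv 0$ without this step, but both arguments are sound.
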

Our second aim is to show that this corollary stays true for the Schr\"odinger equation
in presence of a potential: $i\partial_tu(x)=\mathcal{L}u(x)+V(x,t)u(x)$, with a bounded time-dependent potential $V$.
This time, we will use real variable calculus. 

This approach combines the main techniques of \cite{fbv,jlmp}, to prove first that a fast decaying solution at two different times preserves this decay at any interior time, and, later, via a Carleman estimate with Gaussian weight, we give a lower bound for the $\ell^2-$norm of the solution in a region far from the origin ({\it see} Theorem \ref{th:4.7} below). A combination of these two facts leads then to:

\begin{theoreml}[Uniqueness result]
\label{thC}
Let $u\in C^1([0,1]:\ell^2(\tree_q))$ be a solution of $i\partial_tu(x)=\mathcal{L}u(x)+V(x,t)u(x)$ with $V$ a bounded potential. If for $\mu>1$
\[
\sum_{x\in\tree_q}e^{2\mu|x|\log(|x|+1)}\big(|u(x,0)|^2+|u(x,1)|^2\big)<+\infty,
\]
then $u\equiv0$.
\end{theoreml}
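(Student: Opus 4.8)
The strategy follows the two-step scheme announced before the statement: an interior-in-time propagation of the fast decay, followed by a Carleman-type lower bound that is incompatible with the decay at the endpoints unless the solution vanishes. First I would reduce the weight $e^{2\mu|x|\log(|x|+1)}$ appearing in the hypothesis to a Gaussian weight in a suitable parameter. The natural move is to note that for a solution with the stated super-exponential decay at $t=0$ and $t=1$, the decay in fact improves: one shows that for \emph{every} $R>0$ one has $\sum_{x}e^{2R|x|}(|u(x,0)|^2+|u(x,1)|^2)<\infty$ (which is weaker than the hypothesis, since $|x|\log(|x|+1)$ dominates $R|x|$), and, more importantly, that this exponential-of-linear decay is preserved at all interior times $t\in[0,1]$ with a constant depending on $R$, on $\|V\|_\infty$, and on $t$. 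This ``logarithmic convexity'' or conditional-stability step is obtained by multiplying the equation by the real exponential weight $\phi_R(x)=e^{R|x|}$ and estimating $\partial_t\|\phi_R u\|_{\ell^2}$ and $\partial_t^2\|\phi_R u\|_{\ell^2}$, using that the commutator of $\mathcal L$ with multiplication by $\phi_R$ is bounded on $\tree_q$ precisely because $\mathcal L$ only couples nearest neighbours and $\big||x|-|y|\big|=1$ for $y\sim x$; the boundedness of $V$ enters only through a harmless additive term. This is exactly the tree analogue of the first step in \cite{fbv,jlmp}, and the combinatorial geometry of the tree makes the commutator estimate clean.

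The second step is the Carleman estimate with Gaussian weight, which is Theorem~5.6 in the paper (and which I may invoke): there exist constants so that for $u\in C^1([0,1]:\ell^2(\tree_q))$ solving $i\partial_t u=\mathcal L u+Vu$, and for a Gaussian-type weight $e^{\gamma\varphi(x,t)}$ with $\varphi$ of order $|x|^2$ (suitably truncated so that everything is $\ell^2$), one controls $\|e^{\gamma\varphi}u\|$ in an exterior region $|x|\ge \rho$ by the boundary contributions at $t=0,1$ plus a term absorbed by the equation. Concretely I would apply this Carleman inequality to $u$ itself: the right-hand side is finite and small because Step~1 guarantees a decay at $t=0,1$ strong enough to beat the quadratic exponential weight, while the left-hand side forces $\|u\|_{\ell^2(|x|\ge\rho)}$ to decay faster than any Gaussian rate $\gamma$. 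Letting $\gamma\to\infty$ (equivalently, optimizing the free parameter in the weight) then shows $u$ vanishes outside a fixed ball; combined with a unique-continuation / energy argument — or directly with a second application of the Carleman estimate centred differently — one concludes $u\equiv 0$ on all of $\tree_q\times[0,1]$. The scaling here is the crucial point: the hypothesis threshold $\mu>1$ is exactly what is needed so that, after Step~1 upgrades the decay, the endpoint data lie below the critical Gaussian, mirroring how $\mu>1$ is sharp on $\Z$ in \eqref{eq:decayZ}.

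The main obstacle is Step~1, the propagation of decay to interior times in the \emph{time-dependent}, merely bounded potential case: unlike the free or time-independent situation one cannot diagonalize or use the Phragm\'en–Lindel\"of principle, so one must run the real-variable convexity argument carefully. The delicate points are (i) justifying the formal differentiations $\partial_t\|\phi_R u\|^2$ and the integration by parts on the infinite tree — this needs an a priori finiteness $\|\phi_R u(\cdot,t)\|<\infty$ for all $t$, obtained by a Gronwall argument on truncated weights $\phi_{R}^{(k)}=\min(\phi_R,k)$ and then passing to the limit $k\to\infty$; and (ii) getting the commutator $[\mathcal L,\phi_R]$ bounded \emph{uniformly} in a way that does not degrade as $R\to\infty$ — on the tree this works because $\phi_R(y)/\phi_R(x)\in\{e^{R},e^{-R}\}$ for $y\sim x$, so the commutator, while not bounded uniformly in $R$, grows only like $e^{R}$, which is reabsorbed when one later takes $R$ large \emph{before} invoking the Gaussian Carleman bound. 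Once these technical points are handled, the remainder is a combination of the two estimates exactly as in \cite{fbv}, with the homogeneous-tree geometry entering only through the elementary identity $\big||x|-|y|\big|=1$ for neighbours and the bound $\#\{y:|y|=n\}\le (q+1)q^{n-1}$, the latter being comfortably dominated by the super-exponential weights in play.
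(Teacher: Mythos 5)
Your two-step architecture (propagate decay to interior times, then apply a Gaussian-weight Carleman estimate) matches the strategy the paper announces, but both steps as you describe them have genuine gaps. First, in Step~1 you downgrade the hypothesis to exponential weights $e^{2R|x|}$ and propagate only those. The paper's Proposition~5.2 propagates the full weight $(1+|x|)^{\gamma(1+|x|)}$, i.e.\ $e^{\gamma|x|\log|x|}$, uniformly in $t\in[0,1]$, by a two-sided log-convexity argument using the data at \emph{both} endpoints; this is what yields the uniform upper bound $\lambda(R)\le ce^{-\mu R\log R}$ on the mass in the annulus $|x|\sim R$. With only $e^{-R'|x|}$ decay at interior times (for each fixed $R'$) one has $\lambda(R)\lesssim C(R')e^{-R'R}$, which does \emph{not} contradict the lower bound $\lambda(R)\ge ce^{-(1+\eta)R\log R}$ unless you let $R'\sim\log R$; but then the constant $C(R')$ coming from your Gronwall/commutator argument degrades like $\exp(Ce^{R'})\sim\exp(CR^{c})$, and you give no reason this is dominated by $e^{R'R}$. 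The whole point of the $|x|\log|x|$ weight is that the hyperbolic-sine commutator terms can be organized into a telescoping nonnegative part plus a remainder bounded by $C\norm{f}^2$ with $C$ independent of the weight parameter; that is the step you cannot skip.

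Second, your use of the Carleman inequality is backwards. The paper's Theorem~5.6 applies the Carleman estimate to a cut-off of $u$ (with $\varphi(t)$ large in the middle of the time interval and zero at $t=0,1$) to produce a \emph{lower} bound on $\lambda(R)$, the $L^2_{x,t}$ mass near $|x|=\lfloor R\rfloor$: if the solution has unit mass at some $|x_0|=2$ for interior times, the cut-off error supported on that annulus must carry enough weight to balance the left-hand side. You instead claim the Carleman inequality gives an \emph{upper} bound forcing $u$ to ``decay faster than any Gaussian'' and to vanish outside a fixed ball, finished by an unspecified unique-continuation argument. The inequality as stated (weighted norm of $g$ on the left, weighted norm of $(i\partial_t+\mathcal{L})g$ plus a boundary term at $|x|=1$ on the right) cannot be run that way: the cut-off error terms on the right carry the same exponential weight factors and grow with $\alpha$, so you cannot send the weight parameter to infinity, and the hypothesis $e^{-\mu|x|\log|x|}$ does not ``beat'' a genuinely quadratic weight $e^{\alpha|x|^2/R^2}$ except under the precise coupling $\alpha\sim R\log R$ that the paper exploits. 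The correct conclusion is reached by confronting the lower bound $ce^{-(1+\eta)R\log R}$ with the upper bound $ce^{-\mu R\log R}$ from Step~1 and letting $R\to\infty$, after translating the root so that some $x_0$ with $|x_0|=2$ carries mass at interior times.
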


This shows that Corollary \ref{corB} is also valid in the presence of a bounded potential, in particular,
the condition $\mu>1$ is essentially sharp up to the end-point $\mu=1$ which is open.
This result is exactly the same as in the
case of $\Z$, \cite{fbv}. This is no longer surprising in view of Theorem \ref{thA} and Corollary \ref{corB} as the influence
of the tree in the optimal decay is very mild. However, one may ask if this result is true for any infinite graph,
or if it can be extended to large classes of graphs. We provide some examples of infinite graphs for which
the behavior of the solutions is different.


\medskip

The paper is organized as follows: in Section 2 we introduce some notation and preliminaries from the theory of entire functions as well as a summary of the spectral theory of the adjacency matrix on $\tree_q$. These notions can be found in \cite{CdVT,Lbook}, but we include them here to clarify our presentation. Section 3 studies the free Schr\"odinger equation and includes the proof of Theorem \ref{thA}. 
Section \ref{carleman} covers the real variable approach, proving Theorem \ref{thC} via a Carleman inequality and logarithmic convexity of $\ell^2$ weighted norms. We conclude in Section \ref{other} with some considerations on other graphs.

\section{Notation and preliminaries}

\subsection{Entire functions of exponential type}

As in \cite{jlmp}, we will use methods from complex analysis. For the reader's convenience,
we begin by briefly outlining some definitions and facts on entire functions of exponential
type that we need. Details can be found in \cite{Lbook} (see in particular Lectures 8 and 9).
Recall that an entire function $f$ is said to be of exponential type if for some $k>0$
\begin{equation}
\label{eq:exptype}
|f(z)|\le C\exp(k|z|).
\end{equation}  
In this case the type of an entire function $f$ is defined by 
\begin{equation}
\sigma =\limsup_{r\to\infty}\frac{\log \max\{|f(re^{i\phi})|;\phi\in[0,2\pi]\}}{r} <\infty.
\end{equation}
In particular, an entire function $f$ is of zero exponential type if for any $k>0$ there exists $C=C(k)$ such that 
\eqref{eq:exptype} holds.

Let $f(z)$ be an entire function of exponential type, $f(z)=\sum_{n=0}^\infty c_nz^n$. Then the type of $f$ can be  expressed in terms of its Taylor coefficients as
\begin{equation}\label{eq:coeftype}
 \limsup_{n\to\infty} n|c_n|^{1/n} = e\sigma. 
 \end{equation}

The growth of a function $f$ of exponential type along different directions is described  by the indicator function
\[
h_f(\varphi)=\limsup_{r\to\infty}\frac{\log|f(re^{i\varphi})|}{r}.
\]
This function is the support function of some convex compact set $I_f\subset \C$ which is called the indicator diagram of $f$\,:
\[
h_f(\varphi)=\sup\{\Re(ae^{-i\varphi}),a\in I_f\}.
\]
 In particular
\begin{equation}
\label{eq:if}
h_f(\varphi)+h_f(\pi+\varphi)\ge 0.
\end{equation} 
For example the indicator function of $e^{az}$ for $a\in \C$ is $h(\varphi)=\Re(ae^{i\varphi})$ and its indicator diagram consists of 
a single  point, $\bar{a}$. 

Clearly, $h_{fg}(\varphi)\le h_f(\varphi)+h_g(\varphi)$, implying that
\[
I_{fg}\subset I_f+I_g:=\{z=z_1+z_2: z_1\in I_f, z_2\in I_g\}.
\] 

\subsection{Trees}

In this section, we recall some basics of harmonic analysis on trees. For more on this subject,
one may refer to {\it e.g.} \cite{cs,CSM,FTN,FTP} and references therein.

Throughout this paper, $q$ will be an integer, $q\geq 2$. We will denote by $\tree=\tree_q$ the homogeneous
tree of degree $q+1$. This means that the tree is formed by a connected graph with no loops where every vertex is adjacent 
to $q+1$ other vertices, relation denoted by $y\sim x$. 

A {\em geodesic path} (resp. {\em geodesic ray}, {\em infinite geodesic}) in $\tree$ is a finite (resp. one-sided infinite,
resp. doubly infinite) sequence $(x_n)$ such that two consecutive terms are adjacent, $x_n\sim x_{n-1}$ and
that does not turn back $x_{n+1}\not=x_{n-1}$. 
We can then define the distance $\d(x,y)$ as the number of points in the geodesic path which goes from $x$ to $y$. 
In particular, in a geodesic, $\d(x_n,x_m)=|n-m|$.

Moreover, we fix a vertex of the tree $\tree$ to be the root $o$ and write $|x|=\d(x,o)$.
For an integer $\ell\geq 0$, we denote by $S_\ell=\{x\in\tree\,: |x|=\ell\}$.
The boundary $\partial\tree$ of $\tree$ is defined as the set of infinite paths starting at the root $o$. 
Then, we define, for a point $x\in\tree$ and $w\in\partial\tree$, the confluence point of $x$ and $w$, 
denoted by $x\wedge w$ as the last point lying on $w$ in the geodesic path joining $o$ and $x$. Attached to this confluence point we define 
the {\em Busemann function} $h_w$ and the {\em Horocycles} $\hh_k^w$, $k\in\Z$ by
\[
h_w(x)=|x|-2|x\wedge w|\quad,\mbox{and}\quad \hh_k^w=\{x\in\tree\,: h_w(x)=k\}.
\]
We call $k$ the height of the horocycle $\hh_k^w$.
Every horocycle is infinite and every $x\in \hh_k^w$ has one neighbor $x^-\in \hh_{k-1}^w$ (its predecessor) and $q$
neighbors in $\hh_{k+1}^w$ (its successors).

Now let $\psi_{\ell,k}=|S_\ell\cap\hh_k^w|$ be the number of elements in an horocycle $\hh_k$ that are of length $\ell$.
When $k\geq 0$,
$$
\psi_{\ell,k}=\begin{cases}
q^k&\mbox{if }\ell=k\\
(q-1)q^{k+p-1}&\mbox{if }\ell=k+2p,\ p\geq1\\
0&\mbox{otherwise}
\end{cases}
$$
and for $k\geq1$,
$$
\psi_{\ell,-k}=\begin{cases}
1&\mbox{if }\ell=k\\
(q-1)q^{p-1}&\mbox{if }\ell=k+2p,\ p\geq1\\
0&\mbox{otherwise}
\end{cases}.
$$

\begin{figure}[ht!]
\setlength{\unitlength}{0.02cm}
\begin{center}
\begin{picture}(500,300)
\drawline(75,25)(345,295)
\put(335,285){\vector(1,1){10}}
\drawline(85,35)(95,25)

\put(350,290){$\scriptstyle w$}

\put(20,20){$\scriptstyle\hh_{3}$}
\put(20,32){$\scriptstyle\hh_{2}$}
\put(20,47){$\scriptstyle\hh_{1}$}
\put(20,77){$\scriptstyle\hh_{0}$}
\put(20,137){$\scriptstyle\hh_{-1}$}
\put(20,257){$\scriptstyle\hh_{-2}$}

\dottedline{3}(40,25)(450,25)
\dottedline{3}(40,35)(450,35)
\dottedline{3}(40,50)(450,50)
\dottedline{3}(40,80)(450,80)
\dottedline{3}(45,140)(450,140)
\dottedline{3}(45,260)(450,260)

\dottedline{3}(60,1)(450,1)
\put(0,2){$\scriptstyle \partial\tree\setminus\{w\}$}

\dottedline{4}(100,4)(100,17)
\dottedline{4}(200,4)(200,17)
\dottedline{4}(300,4)(300,17)
\dottedline{4}(400,4)(400,17)

\drawline(100,50)(125,25)
\drawline(115,35)(105,25)

\drawline(160,50)(135,25)
\drawline(145,35)(155,25)
\drawline(130,80)(185,25)
\drawline(175,35)(165,25)

\put(130,80){\circle*{6}}
\put(122,88){$o$}

\drawline(250,80)(195,25)
\drawline(205,35)(215,25)
\drawline(220,50)(245,25)
\drawline(235,35)(225,25)

\drawline(190,140)(305,25)
\drawline(295,35)(285,25)
\drawline(280,50)(255,25)
\drawline(265,35)(275,25)

\drawline(340,50)(365,25)

\drawline(355,35)(345,25)
\drawline(430,140)(315,25)
\drawline(325,35)(335,25)
\drawline(370,80)(425,25)
\drawline(415,35)(405,25)
\drawline(385,35)(395,25)
\drawline(400,50)(375,25)

\drawline(440,130)(310,260)



\end{picture}
\caption{The tree $\tree_2$ and horocycles.}
\label{fig:levelset}
\end{center}
\end{figure}
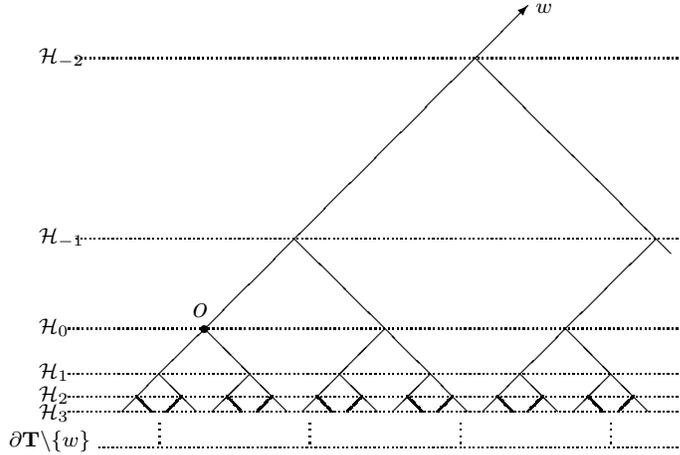

The so called Helgason-Fourier transform ({\it see e.g.} \cite{cs}) of a function $f$ on the tree is defined by the formula
\[
\ff_\tree[f](s,w)
:=\sum_{x\in\tree}f(x)q^{-(1/2+is)h_w(x)},\ \ s\in\T,\ w\in\partial\tree,
\]
where $\T=\R/\tau\Z$, usually identified with the interval $[-\tau/2,\tau/2)$, with $\tau=2\pi/\log q$.

Moreover, the following inversion formula holds,
\begin{equation}
\label{inv}
f(x)=\int_{\T}\int_{\partial\tree} q^{-(1/2-is)h_w(x)}\ff_\tree[f](s,w)\d\nu(w)\d\mu(s),\ x\in\tree.
\end{equation}

We refer to \cite{cs} for the exact definitions of the measures $\nu$ and $\mu$.

We will also need to distinguish between the neighbors and double neighbors of a vertex of the tree. More precisely,
for $x\in\tree$ with $|x|=n$ we set

--- $x_f=\{y\in\tree\,:\ |y|=n+1\}$ and, if $x\not=o$, $x_p$ to be the unique $y\in\tree$ such that $y\sim x$ and $|y|=n-1$.
Note that $|o_f|=q+1$ and, if $x\not=o$, $|x_f|=q$.

--- $x_{ff}=\{y\in\tree\,:\ |y|=n+2,\ y_p\in x_f\}$ so that $|o_{ff}|=q(q+1)$ and, if $x\not=o$, $|x_{ff}|=q^2$.

--- If $|x|\geq 2$, $x_{pp}=(x_p)_p$.

--- If $x\not=o$, $x_{pf}=(x_p)_f\setminus\{x\}$ so that $|y|=|x|$ if $y\in x_{pf}$. Note that if $|x|=1$, $|x_{pf}|=q$ while otherwise $|x_{pf}|=q-1$.

In other words, $x_f$ is the set of followers (daughters) of $x$, $x_p$ the predecessor (mother) of $x$, $x_{pp}$ is the grand-mother of $x$,
$x_{ff}$ the set of grand-daughters of $x$, $x_{pf}$ the set of sisters of $x$.

Note that, for any function $\ffi$ on $\tree$, and any $n\geq 1$,
\begin{equation}
\label{eq:usefull}
\sum_{|x|=n}\sum_{z\in x_{pf}}\ffi(z)=\begin{cases} q\sum_{|x|=1}\ffi(x)&\mbox{if }n=1\\
(q-1)\sum_{|x|=n}\ffi(x)&\mbox{if }n\geq 2
\end{cases}.
\end{equation}

Finally, we consider the adjacency operator $A_0$ and the Laplace operator $\lll$ on $\tree$: for $u$ a function on $\tree$,
$$
A_0u(x)=\sum_{y\sim x}u(y)
$$
and\footnote{Results in this paper can be adapted in a straighforward way to the Laplacian more commonly used in the physics community
$$
\Delta\ffi(x)=\deg x\ffi(x)-\sum_{y\sim x}\ffi(y).
$$}
\begin{eqnarray*}
\lll u(x)&=&\left(I-\frac{1}{q+1}A_0\right) u(x)=u(x)-\frac{1}{q+1}\sum_{y\sim x}u(y)\\
&=&\frac{1}{q+1}\sum_{y\sim x}\bigl(u(x)-u(y)\bigr).
\end{eqnarray*}

We will denote by $\|\cdot\|_2$ the $\ell^2(\tree)$-norm: if $u\,:\tree\to\C$,
$$
\norm{u}^2_2=\sum_{x\in\tree}|u(x)|^2
$$
and by $\|\cdot\|_{L^2_{x,t}}$ the $L_t^2\ell^2-$norm: if $u\,:[0,1]\times\tree\to\C$,
$$
\|u\|_{L^2_{x,t}}=\int_0^1\sum_{x\in\tree}|u(t,x)|^2\,\mbox{d}t.
$$

\section{Free Schr\"odinger equation on the tree}

We want to study uniqueness properties of solutions of the Schr\"o\-din\-ger equation
$i\partial_tu=\mathcal{L}u$
assuming that they have fast decay at two different times.
Adapting the method developed in \cite{jlmp} in the case of the line $\Z$ to the tree, our main result in this section is Theorem A from the introduction, in a slightly more precise form:

\begin{theorem}\ \\
Assume that $u$ is a strong solution of the equation 
\begin{equation}\label{sch}
i\partial_tu(x,t)=\mathcal{L}u(x,t)=u(x,t)-\frac{1}{q+1}\sum_{y\sim x}u(y,t),\ \ x\in\tree
\end{equation}
with $u(x,0)=u_0(x)$. Assume that there is a $\kappa$ such that, at times $t_0=0$ and $t_1=1$, for $x\not=o$
\begin{equation}
\label{hyp}
|u(x,t_i)|\le \frac{\kappa}{\sqrt{|x|}}\left(\frac{e}{2(q+1)|x|}\right)^{|x|}.
\end{equation}

Then there exists a constant $C$ such that
$u_0$ is the function that only depends on $|x|$ given by the integral representation formula
\[
u_0(x)=\frac{C}{q^{|x|/2}}\int_{0}^\pi \exp\left(-i\frac{q^{1/2}}{q+1}\cos(z)\right)
\ffi_{|x|}(z)\sin(z)\d z
\]
where 
$$
\ffi_j(z)=\frac{q^{1/2}\sin\bigl(z(j+1)\bigr)-q^{-1/2}\sin\bigl(z(j-1)\bigr)}{q+q^{-1}-2\cos(2z)}.
$$
\end{theorem}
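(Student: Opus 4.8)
The plan is to transplant the complex-analytic argument of \cite{jlmp} from $\Z$ to $\tree$, with the Helgason--Fourier transform playing the role of the ordinary Fourier transform. Since $u$ is a strong solution, $u(\cdot,t)=e^{-it\mathcal{L}}u_0$. The generalized eigenfunctions $x\mapsto q^{-(1/2+is)h_w(x)}$ of the adjacency operator have eigenvalue $q^{1/2}(q^{is}+q^{-is})=2q^{1/2}\cos(s\log q)$, so $\mathcal{L}$ acts on the Helgason--Fourier side as multiplication by $\lambda(s)=1-\frac{2q^{1/2}}{q+1}\cos(s\log q)$ and hence
\[
\ff_\tree[u(\cdot,1)](s,w)=e^{-i\lambda(s)}\,\ff_\tree[u_0](s,w);
\]
plugging this into the inversion formula \eqref{inv} gives representations of $u_0(x)$ and $u(x,1)$ that differ only by the factor $e^{-i\lambda(s)}$, the exact analogue of $\widehat{u(\cdot,1)}=e^{-i\phi}\widehat{u_0}$ on $\Z$. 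Writing $z=s\log q$ and $\eta=e^{iz}$, one has $e^{-i\lambda(s)}=e^{-i}e^{i\frac{q^{1/2}}{q+1}(\eta+\eta^{-1})}$, a product of an exponential in $\eta$ and one in $\eta^{-1}$, each of exponential type $2r$ with $r:=\frac{q^{1/2}}{2(q+1)}$.

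Next I would turn the decay \eqref{hyp} into growth information. For fixed $w$, grouping the sum defining $\ff_\tree[u_0](s,w)=\sum_{k\in\Z}q^{-(1/2+is)k}\sum_{x\in\hh_k^w}u_0(x)$ by horocycle height and using the counts $\psi_{\ell,k}$ together with Stirling's formula (which turns the right-hand side of \eqref{hyp} into $\asymp\frac1{n!}\bigl(\tfrac1{2(q+1)}\bigr)^n$), one finds that $\ff_\tree[u_0](\cdot,w)$, viewed as a function of $\eta$, splits as $P^w(\eta)+S^w_0+N^w(\eta^{-1})$ where, by \eqref{eq:coeftype}, $P^w$ and $N^w$ are entire of exponential type $\le r$. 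Doing the same at $t_1=1$ gives the analogous splitting $P^w_1+S^w_{0,1}+N^w_1$ with $P^w_1,N^w_1$ of type $\le r$, and Step 1 relates the two pairs through multiplication by the explicit exponential multiplier above.

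Then comes the Phragm\'en--Lindel\"of endgame, following \cite{jlmp}. Since the multiplier is a product of pure exponentials its indicator diagrams are single points; combining with the sub-additivity $I_{fg}\subset I_f+I_g$ and the balance inequality \eqref{eq:if}, the constraints ``positive and negative parts of type $\le r$ at both $t=0$ and $t=1$, related by the explicit multiplier'' force $\ff_\tree[u_0](z,w)=C_w\,e^{-i\frac{q^{1/2}}{q+1}\cos z}$ -- the ``halfway'' profile, the tree analogue of $C e^{-x^2(1/\beta^2+i/4T)}$ on $\R$ and of $\gamma\,i^{-n}J_n(1)$ on $\Z$ (whose Fourier transform is $\propto e^{-i\cos\theta}$), since any other choice of the parameter would make one of the four functions grow too fast. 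Consistency of the Helgason--Fourier transform in $w$ then forces $C_w$ (and $S^w_0$) to be independent of $w$, i.e. $u_0$ is radial; substituting $\ff_\tree[u_0]=C\,e^{-i\frac{q^{1/2}}{q+1}\cos z}$ into the inversion formula — using that the radial kernel is $q^{-|x|/2}$ times the normalized spherical function $\Psi_{|x|}(z)=\cos(|x|z)+\frac{q-1}{q+1}\cot z\,\sin(|x|z)$, so that $\ffi_{|x|}(z)=\frac{(q^{1/2}+q^{-1/2})\sin z}{q+q^{-1}-2\cos 2z}\,\Psi_{|x|}(z)$ and $q+q^{-1}-2\cos 2z$ carries the Plancherel density — reproduces exactly the stated integral. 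Comparing its leading asymptotics with \eqref{hyp} gives $|C|\le\kappa$, so $\gamma:=C$ works.

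I expect the technical heart of the proof to be the second step together with the passage to radiality: converting the \emph{pointwise} bound \eqref{hyp}, which lives on spheres $S_n$ whose cardinality $|S_n|\sim q^n$ grows exponentially, into the clean statement that the positive and negative parts of $\ff_\tree[u_0](\cdot,w)$ are entire of exactly exponential type $r$, and then squeezing out the $w$-dependence; one must also track carefully the $q^{\pm|x|/2}$ normalisations and the Plancherel density so that the abstract conclusion $\ff_\tree[u_0]=C e^{-i\frac{q^{1/2}}{q+1}\cos z}$ matches the explicit kernel $\ffi_j$ in the statement. By contrast the Phragm\'en--Lindel\"of step, once the right entire functions of exponential type are in hand, runs parallel to \cite{jlmp}.
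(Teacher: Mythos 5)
Your proposal is correct and follows essentially the same route as the paper: diagonalize $\mathcal{L}$ via the Helgason--Fourier transform, regroup the transform by horocycle height using the counts $\psi_{\ell,k}$ to get entire functions of exponential type $\sigma=\frac{q^{1/2}}{2(q+1)}$ in $\xi=q^{-is}$ and $\xi^{-1}$, pin the indicator diagrams to single points via the explicit exponential multiplier and \eqref{eq:if}, finish with Phragm\'en--Lindel\"of (the paper does this at $t=1/2$), remove the $w$-dependence by evaluating the Laurent series at $\xi=\sqrt q$, and invert. No substantive differences to report.
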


\begin{remark} A change of variable allows us to write $u_0$ as
$$
u(|x|,0)=\frac{C}{q^{|x|/2}}\ff[\psi_{|x|}]\left(\frac{q^{1/2}}{q+1}\right)
$$
where $\ff$ is the Fourier transform on $\R$ and
$$
\psi_j(s)=
\frac{q^{1/2}\sin\bigl((j+1)\arccos s\bigr)-q^{-1/2}\sin\bigl((j-1)\arccos s\bigr)}{q+q^{-1}+2(1-2s^2)}
$$
on $(-1,1)$ and $\psi_j=0$ on $\R\setminus(-1,1)$.
\end{remark}

\begin{proof}
Let us fix a ray $w=oy_1y_2\ldots$. Let $k\in\Z$. As we already noticed, if $x\in \hh_k^w$, it has exactly 
one predecessor in $\hh_{k-1}^w$ and $q$ successors in $\hh_{k+1}^w$.
Therefore,
$$
\mathcal{L}\left(q^{-(1/2+is)h_w(x)}\right)=
\left(1-\frac{q^{1/2+is}}{q+1}-q\frac{q^{-1/2-is}}{q+1}\right)q^{-(1/2+is)h_w(x)}.
$$
For a solution $u$ of \eqref{sch}, we consider the Fourier-Helgason transform 
$\tilde{u}(s,w,t)=\ff_\tree[u(\cdot,t)](s,w)$, whose evolution is given by
\[
i\partial_t \tilde{u}=\left(1-\frac{q^{1/2}}{q+1}(q^{is}+q^{-is})\right)\tilde{u}.
\]
Hence, if we set $\sigma=\frac{q^{1/2}}{2(q+1)}$,
\begin{equation}\label{FH}
\tilde{u}(s,w,t)=e^{-i\bigl(1-2\sigma(q^{is}+q^{-is})\bigr)t}\tilde{u}(s,w,0).
\end{equation}

Now we decompose,
\begin{eqnarray*}
\tilde{u}(s,w,t)&=&\sum_{x\in\tree,\ h_w(x)>0}u(x,t)q^{-h_w(x)/2}q^{-is h_w(x)}\\
&&+\sum_{x\in\tree,\ h_w(x)\le0}u(x,t)q^{-h_w(x)/2}q^{-is h_w(x)}\\
&=&\sum_{k=0}^{+\infty}\frac{1}{q^{k/2}}
\left(\sum_{x\in\hh_k^w}u(x,t)\right)\xi^k\\
&&+\sum_{k=1}^{+\infty}q^{k/2}
\left(\sum_{x\in\hh_{-k}^w}u(x,t)\right)\left(\frac{1}{\xi}\right)^k
\end{eqnarray*}
where $\xi=q^{-is}$.

Now write $b_0=1$ and, for $\ell\geq 1$, $b_\ell=\dst\frac{1}{\sqrt{\ell}}\left(\frac{e}{2(q+1)\ell}\right)^{\ell}$ so that if $t_j\in\{0,1\}$,
\begin{eqnarray*}
\abs{\sum_{x\in\hh_k^w}u(x,t_j)}&=&\abs{\sum_{\ell=0}^\infty\sum_{x\in\hh_k^w\cap S_\ell}u(x,t_j)}
\leq \kappa\sum_{\ell=0}^\infty\psi_{\ell,k}b_\ell\\
&\leq&\begin{cases}\dst
\kappa q^k\left(b_k+(q-1)\sum_{p=1}^{\infty}q^{p-1}b_{k+2p}\right)&\mbox{for }k\geq 0\\
\dst\kappa\left(b_{-k}+(q-1)\sum_{p=1}^{\infty}q^{p-1}b_{-k+2p}\right)
&\mbox{for }k\leq -1
\end{cases}.
\end{eqnarray*}

Using that $(k+2p)^{k+2p+1/2}\geq k^{k+1/2}$ when $k,p\geq 1$, and that $(2p)^{2p+1/2}\geq 4$ we get that,
for $k\geq 1$,
\begin{multline*}
\sum_{p=1}^{\infty}q^{p-1}b_{k+2p}=\left(\frac{e}{2(q+1)}\right)^k\frac{1}{q}\sum_{p=1}^{\infty}\left(\frac{e\sqrt{q}}{2(q+1)}\right)^{2p}
\frac{1}{(k+2p)^{k+2p+1/2}}\\
\leq \frac{1}{\sqrt{k}}\left(\frac{e}{2(q+1)k}\right)^k\frac{e^2}{4(q+1)^2\left(1-\frac{e^2q}{4(q+1)^2}\right)}
\leq b_k
\end{multline*}
and the same bound holds for $k=0$, thus
$$
\abs{\sum_{x\in\hh_k^w}u(x,t_j)}\leq \begin{cases}\kappa q&\mbox{for }k=0\\
\kappa q^{k+1}b_k=\dst\kappa q\frac{1}{\sqrt{k}}\left(\frac{eq}{2(q+1)k}\right)^{k}&\mbox{when }k\geq 1\\
\kappa qb_{|k|} =\dst\kappa q\frac{1}{\sqrt{|k|}}\left(\frac{e}{2(q+1)|k|}\right)^{|k|}&\mbox{when }k\leq -1
\end{cases}.
$$
It follows that
$$
\phi^+_j(\xi,w):=\sum_{k=0}^{+\infty}q^{-k/2}
\left(\sum_{x\in\hh_k^w}u(x,t_j)\right)\xi^k
$$
extends into an entire function in $\xi$ of exponential type $\dst \sigma$. Its indicator
diagram $I_j^+$ is therefore included in the closed disc $\bar D(0,\sigma)$.
On the other hand
\begin{equation}\label{exptype}
\phi^-_j(\zeta,w):=
\sum_{k=1}^{+\infty}q^{k/2}
\left(\sum_{x\in\hh_{-k}^w}u(x,t_j)\right)\zeta^k
\end{equation}
extends into an entire function in $\zeta$ of exponential type $\sigma$ as well and its indicator diagram
$I_j^-$ is therefore also included in the disc $\bar D(0,\sigma)$.
Actually, a little more is shown, namely that
\begin{equation}
\label{eq:bounded}
|\phi^\pm_j(\xi,w)|\leq C_q\kappa e^{\sigma|\xi|},
\end{equation}
since we bound the corresponding coefficient of each sum by the $k$-th coefficient of the Taylor series of $e^{\sigma|\xi|}$, fact that motivates the use of the hypothesis \eqref{hyp}.

Let us now turn back to \eqref{FH} which we write as
$$
\tilde u(s,w,t)=e^{-i\bigl(1-2\sigma\bigl(\xi+\xi^{-1})\bigr)t}\bigl(\phi^-_0(\xi^{-1},w)+\phi^+_0(\xi,w)\bigr).
$$
This holds a priori for $\xi=q^{-is}$ and thus extends to $\xi\in\C\setminus\{0\}$ and every $t$.
We write $\tilde u(\xi,w,t)$ for the corresponding extension.

For $t=1$ we obtain
\begin{multline*}
\phi_1^\pm(\xi,w)=-\phi_1^\mp(\xi^{-1},w)\\
+e^{-i}\exp\bigl(2i\sigma(\xi+\xi^{-1})\bigr)\bigl(\phi_0^\pm(\xi,w)+\phi_0^\mp(\xi^{-1},w)\bigr).
\end{multline*}
It follows that $I_1^\pm\subset I_0^\pm+2i\sigma$ which in turn implies that $I_1^\pm$ is reduced to $i\sigma$
and $I_0^\pm$ is reduced to $-i\sigma$.

Let us now take $t=1/2$. Then
$$
\tilde u(\xi,w,1/2)=e^{-i/2}e^{i\sigma(\xi+\xi^{-1})}\bigl(\phi^-_0(\xi^{-1},w)+\phi^+_0(\xi,w)\bigr).
$$
Write $\tilde u(\xi,w,1/2)=u_+(\xi)+u_-(\xi^{-1})$ where $u_+$ (resp. $u_-$) contains all terms of positive (resp. negative)
exponent in the Laurent series of $\tilde u$. The indicator diagram of those functions coincide with $\{0\}$ thus
$u_\pm$ are entire functions of $0$ exponential type.
On the other hand, \eqref{eq:bounded} shows that $u_\pm$ are bounded on $i\R$. Indeed, when $\xi\to+\infty$,
\begin{multline*}
|u_+(i\xi)|\sim |\tilde u(i\xi,w,1/2)|=|e^{-\sigma(\xi+\xi^{-1})}||\phi^-_0(i\xi^{-1},w)+\phi^+_0(i\xi,w)|\\
\sim e^{-\sigma \xi}||\phi^+_0(i\xi,w)|\leq C_q\kappa .
\end{multline*}
To see that $u_+(i\xi)$ is also bounded when $\xi\to-\infty$, let us write
$$
\tilde u(\xi,w,1/2)=e^{-i/2}e^{-i\sigma(\xi+\xi^{-1})}\bigl(\phi^-_1(\xi^{-1},w)+\phi^+_1(\xi,w)\bigr).
$$
Then
\begin{multline*}
|u_+(i\xi)|\sim |\tilde u(i\xi,w,1/2)|=|e^{\sigma(\xi+\xi^{-1})}||\phi^-_1(i\xi^{-1},w)+\phi^+_1(i\xi,w)|\\
\sim e^{\sigma \xi}||\phi^+_1(i\xi,w)|\leq C_q\kappa .
\end{multline*}
The proof for $|u_-(i\xi)|$ is similar but this time $\xi\to 0^\pm$.

Now, according to the Phragmen-Lindel\"{o}f  principle ({\it see e.g.} \cite[Lecture 6]{Lbook}) $u_+$ and $u_-$ are constant 
and thus $\tilde u(\xi,w,1/2)$ does not depend on $\xi$.
It then follows from \eqref{FH} that
$$
\tilde{u}(s,w,0)=C_{w}\exp\bigl(-i\sigma(q^{is}+q^{-is})\bigr)
$$
for some constant $C_{w}$ that depends on the ray $w$.
But, by definition, for $\xi=q^{-is}$
$$
\tilde{u}(\xi,w,0)=\sum_{x\in\tree}u(x,0)\left(\frac{\xi}{\sqrt{q}}\right)^{h_w(x)}
$$
and this extends to all $\xi\in\C\setminus\{0\}$, in particular to $\xi=\sqrt{q}$. This shows that
$$
C_{w}=\exp\bigl(i\sigma(q^{1/2}+q^{-1/2})\bigr)\sum_{x\in\tree}u(x,0)
$$
does not depend on $w$. We thus write $C_{w}=C$.

The integral formula for $u(|x|,0)$ then comes from the inversion formula \eqref{inv} 
 and ({\it see} \cite{cs})
\[
\int_{\partial\tree} q^{-(1/2-is)h_w(x)}d\nu(w)=c(-s)q^{(-is-1/2)|x|}+c(s)q^{(is-1/2)|x|},
\]
where $c(s)=\frac{q^{1/2}}{q+1}\frac{q^{1/2+is}-q^{-1/2-is}}{q^{is}-q^{-is}}$.
\end{proof}

As an immediate corollary, we have the following uniqueness property for strong solutions of \eqref{sch}:

\begin{corollary}\ \\
Assume that $u$ is a strong solution of the equation $\eqref{sch}$. Assume that there exists $\epsilon>0$ and $\kappa$ such that, for $x\not=o$
\[
|u(x,t_i)|\le \frac{\kappa}{\sqrt{|x|}}\left(\frac{e}{(2+\epsilon)(q+1)|x|}\right)^{|x|},\ \ t_0=0,\ t_1=1.
\]
Then $u\equiv0$.
\end{corollary}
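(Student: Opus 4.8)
The plan is to derive the corollary directly from the preceding theorem by showing that the hypothesis forces the exceptional function $u_0$ appearing in the theorem's conclusion to vanish. First I would observe that the decay hypothesis here is strictly stronger than \eqref{hyp}: for each fixed $\epsilon>0$ and all $x\neq o$,
\[
\frac{\kappa}{\sqrt{|x|}}\left(\frac{e}{(2+\epsilon)(q+1)|x|}\right)^{|x|}
=\left(\frac{2}{2+\epsilon}\right)^{|x|}\cdot\frac{\kappa}{\sqrt{|x|}}\left(\frac{e}{2(q+1)|x|}\right)^{|x|},
\]
so the hypothesis of the theorem is satisfied (with the same $\kappa$, say). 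Hence the theorem applies and $u_0(x)=C q^{|x|/2}\ff[\psi_{|x|}]\bigl(\frac{q^{1/2}}{q+1}\bigr)$ is the radial function given by the integral representation, and it suffices to prove $C=0$.

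The key step is to extract the precise asymptotic rate of decay of this special solution and compare it with the sharpened decay assumed in the corollary. Since $\psi_j$ is supported on $(-1,1)$ and, near the endpoint $s=1$, behaves like a fixed power of $(1-s)$ times a bounded factor (one checks $\sin((j\pm1)\arccos s)$ stays $O(1)$ and the denominator $q+q^{-1}+1-2s^2$ is bounded below), the Fourier transform $\ff[\psi_j](\tfrac{q^{1/2}}{q+1})$ does not decay faster than polynomially in $j$ once the oscillatory phase is accounted for; more to the point, the theorem explicitly asserts that $U_q$ ``has exactly the rate of decay given by \eqref{eq:thmA}''. I would therefore invoke that stated fact: there are constants $c_1,c_2>0$ with
\[
|u_0(x)|\ge \frac{c_1|C|}{\sqrt{|x|}}\left(\frac{e}{2(q+1)|x|}\right)^{|x|}\quad\text{for }|x|\ge c_2,
\]
whenever $C\neq0$. (If one prefers not to quote it, this lower bound follows from the integral formula via a stationary-phase / Laplace-type analysis of $\int_0^\pi e^{-i\frac{q^{1/2}}{q+1}\cos z}\ffi_{|x|}(z)\sin z\,\d z$, identifying the leading contribution; this is the one genuinely computational point and the main obstacle, though it is routine asymptotics.)

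Finally I would combine the two estimates. The corollary's hypothesis at $t_0=0$ gives, for $x\neq o$,
\[
|u_0(x)|=|u(x,0)|\le \frac{\kappa}{\sqrt{|x|}}\left(\frac{e}{(2+\epsilon)(q+1)|x|}\right)^{|x|}
=\left(\frac{2}{2+\epsilon}\right)^{|x|}\frac{\kappa}{\sqrt{|x|}}\left(\frac{e}{2(q+1)|x|}\right)^{|x|}.
\]
Comparing with the lower bound and cancelling the common factor $\frac{1}{\sqrt{|x|}}\bigl(\frac{e}{2(q+1)|x|}\bigr)^{|x|}$ yields $c_1|C|\le \kappa\bigl(\tfrac{2}{2+\epsilon}\bigr)^{|x|}$ for all large $|x|$. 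Since $\tfrac{2}{2+\epsilon}<1$, letting $|x|\to\infty$ forces $C=0$, hence $u_0\equiv0$. As $u$ is a strong solution with zero initial data, uniqueness for the Schr\"odinger evolution \eqref{sch} gives $u\equiv0$, completing the proof.
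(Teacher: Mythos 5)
Your reduction to the theorem is the intended one (the paper literally calls this an ``immediate corollary''), and the comparison step at the end is fine: the extra factor $\bigl(\tfrac{2}{2+\epsilon}\bigr)^{|x|}\to 0$ does kill any nonzero multiple of a function with a genuine two\-/sided decay rate $\tfrac{1}{\sqrt{|x|}}\bigl(\tfrac{e}{2(q+1)|x|}\bigr)^{|x|}$. The one real dependency is exactly the one you flag: you need a \emph{lower} bound for $|U_q(x)|$ along some sequence $|x|\to\infty$. The paper asserts in the introduction that $U_q$ ``has exactly the rate of decay given by \eqref{eq:thmA}'' but never proves it, and the stationary\-/phase analysis of $\int_0^\pi e^{-i\frac{q^{1/2}}{q+1}\cos z}\ffi_{|x|}(z)\sin z\,\d z$ is not entirely innocent (one must check that the dominant Fourier mode of $\ffi_{|x|}(z)\sin z$ does not get cancelled), so your proof is correct only modulo a fact the paper itself leaves unproven. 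There is a cleaner route that stays entirely inside the proof of the theorem and needs no asymptotics of $U_q$: with the improved hypothesis, the same coefficient estimates show that $\phi^\pm_j$ have exponential type at most $\tfrac{2\sigma}{2+\epsilon}<\sigma$, so $I_0^\pm, I_1^\pm\subset \bar D\bigl(0,\tfrac{2\sigma}{2+\epsilon}\bigr)$; but the identity relating $t=0$ and $t=1$ gives $I_1^\pm\subset I_0^\pm+2i\sigma$, and the two discs $\bar D\bigl(0,\tfrac{2\sigma}{2+\epsilon}\bigr)$ and $\bar D\bigl(2i\sigma,\tfrac{2\sigma}{2+\epsilon}\bigr)$ are disjoint for $\epsilon>0$. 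Since a nonzero entire function of exponential type has a nonempty indicator diagram (by \eqref{eq:if}), this forces $\phi_1^\pm\equiv 0$, hence $u\equiv 0$. This is precisely the contradiction mechanism the authors deploy at the end of the proof of Theorem \ref{th:cdvt}, and it is the self\-/contained way to get the corollary; your route buys an explicit identification of the borderline solution, theirs buys independence from the unproven sharpness of $U_q$.
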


\begin{remark}
Note that $\tilde{u}(s,w,0)=C\exp\bigl(-i\sigma(q^{is}+q^{-is})\bigr)$ and therefore $\tilde{u}(s,w,t)=C\exp{\bigl(-i\big(t+\sigma(1-2t)(q^{is}+q^{-is})\big)\bigr)}$. Applying the inversion formula \eqref{inv} to this function we get an integral representation formula for the evolution $u(x,t)$.

This could also be obtained by analytic continuation of the well-known solution of the heat equation on the tree.
\end{remark}

\begin{remark}\label{rem:freeshrodscale} We leave as an exercise to the reader to check that, if $u$ is a strong solution of the equation 
$i\partial_tu(x,t)=\lambda\mathcal{L}u(x,t)$, with $u(x,0)=u_0(x)$, $\lambda>0$ and if
\begin{equation}
\label{eq:freeshrodscale}
|u(x,t_i)|\le \frac{\kappa}{\sqrt{|x|}}\left(\frac{e\lambda}{2(q+1)|x|}\right)^{|x|}
\end{equation}
then
\[
u_0(x)=\frac{C}{q^{|x|/2}}\int_{0}^\pi \exp\left(-i\frac{q^{1/2}\lambda}{q+1}\cos(z)\right)
\ffi_{|x|}(z)\sin(z)\d z
\]
for some constant $C$. 

Note that when $\lambda=q+1$, the condition \eqref{eq:freeshrodscale} is the same for the tree $\tree_q$ and for $\Z$
so that the dependence on the tree is hidden.

\end{remark}

\begin{remark}
The only other uncertainty principle on the tree we are aware of is due to Astengo \cite{Ast}. 
It is of a rather different nature to our results.
More precisely, Astengo states an uncertainty principle in terms of a function on the tree and the modulus of
its Fourier-Helgason transform. In view of Formula \eqref{FH}, Astengo's result immediately translates into a result for
solutions of the free Shr\"odinger equation on the tree:

{\sl Assume that $u$ is a strong solution of the equation 
\begin{equation}
i\partial_tu(x,t)=\mathcal{L}u(x,t)=u(x,t)-\frac{1}{q+1}\sum_{y\sim x}u(y,t),\ \ x\in\tree
\end{equation}
with $u(x,0)=u_0(x)$. Assume that there is a time $t_0$ such that
\begin{enumerate}
\renewcommand{\theenumi}{\roman{enumi}}
\item $|u_0(x)|\leq Ce^{-\alpha|x|}$ for some $C>0$ and some $\alpha>\frac{1}{2}\log q$;

\item $s\to\norm{\ff_{\tree} u(\cdot,s,t_0)}_{L^2(\partial\tree)}\in L^1(\T)$;

\item $\int_{\T}\log\norm{\ff_{\tree} u(\cdot,s,t_0)}_{L^2(\partial\tree)}\,\frac{\mathrm{d}s}{s}<+\infty$;
\end{enumerate}
then $u=0$.}
\end{remark}

\section{Uniqueness for perturbed problems using Carleman estimates}
\label{carleman}

In this section we consider the problem 
\begin{equation}\label{schr}
\partial_tu=i(\mathcal{L}u+Vu)
\end{equation}
where $V=V(x,t)$ is a bounded potential. 

We are going to begin this section by pointing out that a fast decaying solution at times $t=0$ and $t=1$ extends the fast decay to the whole interval $[0,1]$. This is given by an immediate extension of part of the results in \cite{jlmp}.
For convenience, the equation is written in a different way. In any case, by doing a suitable change of variables one can see that the results described in this section can be rewritten in terms of a solution of $i\partial_tu=\mathcal{L}u+Vu$.
We first need an auxiliary lemma:

\begin{lemma}\ \\
Let $u\in C^1([0,T],\tree)$ satisfy \eqref{schr} where $V$ is a complex valued functions in $\tree\times[0,T]$ and bounded. Let
$$
\psi_\alpha(x,t)=(1+|x|)^{\alpha|x|/(1+t)}, \ \ \alpha\in(0,1].
$$
Then, for $T>0$,
$$
\|\psi_\alpha(T)u(T)\|_2^2\le e^{CT}\|\psi_\alpha(0)u(0)\|_2^2,
$$
provided the right-hand side is finite.
\end{lemma}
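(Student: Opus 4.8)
The plan is to run the weighted energy (Gronwall) argument. Put $v(x,t):=\psi_\alpha(x,t)u(x,t)$ and $H(t):=\norm{v(t)}_2^2=\sum_{x\in\tree}\abs{v(x,t)}^2$; I will establish a differential inequality $H'(t)\le CH(t)$ on $[0,T]$, whence Gronwall gives $H(T)\le e^{CT}H(0)$. Since only $H(0)<\infty$ is assumed, a preliminary point is to justify that $H(t)<\infty$ on $[0,T]$ and that $H$ is differentiable with the derivative obtained by differentiating under the sum; as in \cite{jlmp} this is handled by a standard truncation (replace $\tree$ by a finite ball, or $\psi_\alpha$ by $\min(\psi_\alpha,N)$, obtain the estimate with constants uniform in the truncation, then let the truncation go to infinity by monotone convergence). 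I now carry out the formal computation.

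From $\partial_t v=(\partial_t\log\psi_\alpha)\,v-i\psi_\alpha(\mathcal{L}u+Vu)$ we get $H'(t)=2\Re\sum_x\overline{v(x,t)}\,\partial_t v(x,t)$, which splits into three pieces. The potential piece equals $2\sum_x(\Im V)\abs{v}^2$ and is bounded by $2\norm{V}_\infty H(t)$. The weight piece equals $-D(t)$ where
\[
D(t):=2\sum_{x\in\tree}\frac{\alpha\abs{x}\log(1+\abs{x})}{(1+t)^2}\,\abs{v(x,t)}^2\ \ge\ 0
\]
is a favourable (negative) term. For the Laplacian piece, write $\psi_\alpha(x)\mathcal{L}u(x)=v(x)-\frac1{q+1}\sum_{y\sim x}\frac{\psi_\alpha(x)}{\psi_\alpha(y)}v(y)$; the operator $v\mapsto\mathcal{L}v$ is self-adjoint and real, so it contributes nothing to $\Re(i\,\cdot\,)$, and symmetrising the remaining double sum over the unordered edges of $\tree$ shows the Laplacian piece equals $\frac1{q+1}\sum_{x}\sum_{y\sim x}\bigl(\frac{\psi_\alpha(y)}{\psi_\alpha(x)}-\frac{\psi_\alpha(x)}{\psi_\alpha(y)}\bigr)\Im\bigl(v(y)\overline{v(x)}\bigr)$, whose modulus is at most
\[
B(t):=\frac1{q+1}\sum_{x\in\tree}\sum_{y\sim x}\Bigl|\frac{\psi_\alpha(y)}{\psi_\alpha(x)}-\frac{\psi_\alpha(x)}{\psi_\alpha(y)}\Bigr|\,\abs{v(x)}\,\abs{v(y)}.
\]

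The crux is to show $B(t)\le\frac12 D(t)+C'H(t)$. For $y\sim x$ with $\abs{y}=\abs{x}+1$ we have $\psi_\alpha(x)/\psi_\alpha(y)\le1$ and, using $\alpha\le1$ and $t\ge0$,
\[
\frac{\psi_\alpha(y)}{\psi_\alpha(x)}=\frac{(\abs{x}+2)^{\alpha(\abs{x}+1)/(1+t)}}{(\abs{x}+1)^{\alpha\abs{x}/(1+t)}}\ \le\ e\,(\abs{x}+2)^{\alpha/(1+t)}\ \le\ e\,(\abs{x}+2)=:\rho_{\abs{x}}.
\]
Split each edge from level $k$ to level $k+1$ asymmetrically, $\rho_k\abs{v(x)}\abs{v(y)}\le\frac{\theta_k}{2}\abs{v(x)}^2+\frac{\rho_k^2}{2\theta_k}\abs{v(y)}^2$, sum over all vertices and edges, use that a level-$k$ vertex has at most $q+1$ neighbours at level $k+1$ and a level-$(k+1)$ vertex exactly one at level $k$, and regroup by level to obtain $B(t)\le\sum_{k\ge0}\bigl(c_1\theta_k+c_2\rho_{k-1}^2\theta_{k-1}^{-1}\bigr)\sum_{\abs{x}=k}\abs{v(x)}^2$ with $c_1,c_2$ depending only on $q$. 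Because $\rho_k^2\le e^2(k+2)^2$ is at most quadratic while the weight $\frac{2\alpha k\log(1+k)}{(1+t)^2}$ in $D(t)$ grows like $k\log k$, one can choose the free parameters $\theta_k$ (essentially $\theta_k\asymp\rho_k^2(1+t)^2\bigl(\alpha(k+1)\log(k+2)\bigr)^{-1}$) so that $c_1\theta_k+c_2\rho_{k-1}^2\theta_{k-1}^{-1}\le\frac{\alpha k\log(1+k)}{(1+t)^2}+C'$ for every $k$, with $C'=C'(T,\alpha,q)$; the large-$k$ part of this inequality is exactly where the hypothesis $\alpha\le1$ is used, the finitely many small $k$ being absorbed into $C'$. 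Hence $B(t)\le\frac12 D(t)+C'H(t)$.

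Combining the three pieces, $H'(t)\le-D(t)+\bigl(\frac12 D(t)+C'H(t)\bigr)+2\norm{V}_\infty H(t)\le C\,H(t)$ with $C:=C'+2\norm{V}_\infty$, and Gronwall's inequality yields $H(T)\le e^{CT}H(0)$. The main obstacle is precisely the estimate for $B(t)$: the commutator of $\mathcal{L}$ with the weight $\psi_\alpha$ produces coefficients of size $\sim\abs{x}^{2\alpha/(1+t)}$, which are not controlled by $H(t)$ alone and must be absorbed into the negative term $D(t)$, whose coefficients are only of order $\abs{x}\log\abs{x}$; this forces the asymmetric splitting above and a small/large-$\abs{x}$ case distinction, and it is here that $\alpha\le1$ and the precise exponent $\alpha\abs{x}/(1+t)$ in the definition of $\psi_\alpha$ come in. The only other genuine point is the truncation argument justifying differentiation of $H$ under the sum.
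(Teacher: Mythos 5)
Your argument is correct and follows essentially the same route as the paper: the same weighted energy/Gronwall scheme, in which the commutator of $\mathcal{L}$ with the weight, of size $O\bigl((1+|x|)^{\alpha/(1+t)}\bigr)=O(|x|)$ per edge thanks to $\alpha\le 1$, is absorbed into the negative term $-2\tfrac{\alpha|x|\log(1+|x|)}{(1+t)^2}|v(x)|^2$ produced by $\partial_t\log\psi_\alpha$, with the truncation argument of \cite{jlmp} justifying the formal differentiation under the sum. (Two cosmetic points: the correct sign is $\partial_t v=(\partial_t\log\psi_\alpha)v+i\psi_\alpha(\mathcal{L}u+Vu)$, which is immaterial here; and since the edge coefficients are only $O(k)$ against a negative term of size $k\log k$, a plain symmetric Cauchy--Schwarz split already suffices, so the level-dependent parameters $\theta_k$ are more machinery than is needed.)
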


\begin{remark} This is a tree analogue of \cite[Proposition 3.1]{jlmp}. We may as well consider
the more general equation
$$
\partial_t u(x,t)=i\big(\mathcal{L}u(x,t)+V(x,t)u+F(x,t)\big),
$$
where $V$ and $F$ are complex valued functions in $\tree\times[0,T]$ and bounded. In this case,
a simple adaptation of the proof below shows that
$$
\|\psi_\alpha(T)u(T)\|_2^2\le e^{CT}\left(\|\psi_\alpha(0)u(0)\|_2^2+\int_0^T\|\psi_\alpha(s)F(s)\|_2^2\d s\right),
$$
provided the right-hand side is finite.
\end{remark}

\begin{proof}
Define $f(x,t)=\psi_\alpha(x,t)u(x,t)$ and $H(t)=\|f(t)\|_2^2$ for a fixed $\alpha$. We will just write $\psi=\psi_\alpha$. Notice that $\psi$ only depends on $|x|$, so for $|x|=n$ we write $\psi(x)=\psi(n)$.

Formally,
$$
\partial_t f=i\psi\mathcal{L}(\psi^{-1}f)+\phi_t f+iVf=\mathcal{S}f+\mathcal{A}f+iVf,
$$
where $\phi=\log\psi$ and
\begin{eqnarray*}
\mathcal{S}f&=&\phi_tf+\frac{i}{q+1}\sum_{y\sim x}\sinh(\phi(x,t)-\phi(y,t))f(y)\\
\mathcal{A}f&=& \frac{i}{q+1}\sum_{y\sim x}\cosh(\phi(x,t)-\phi(y,t))f(y)-if(x).
\end{eqnarray*}
are symmetric and skew-symmetric operators respectively. Since 
$$
\partial_t H(t)=2\Re\langle \partial_tf,f\rangle,
$$
it is easy to check that $\partial_t H(t)$ is
\begin{eqnarray*}
&\le& \|V\|_\infty\|f\|_2\\
&&+\left(2\phi_t(0)+\frac{2}{\sqrt{q}}|\sinh(\phi(1)-\phi(0)|)\right)|f(o)|^2\\
&&+\sum_{n\ge 1,|x|=n}\left(2\phi_t(n)+\frac{2\sqrt{q}}{q+1}\big|\sinh\big(\phi(n)-\phi(n-1)\big)\big|\right)|f(x)|^2\\
&&+\frac{2\sqrt{q}}{q+1}\sum_{n\ge 1,|x|=n}\big|\sinh\big(\phi(n+1)-\phi(n)\big)\big||f(x)|^2.
\end{eqnarray*}

The result follows after proving that the last three terms are bounded by $C\|f\|_2$, in the same spirit as in \cite{jlmp}. To justify this formal argument, we can prove again the same result (now rigorously) for a truncated weight $\psi_N$ and then let $N\to\infty$  (See \cite{jlmp} for this argument in the line).
\end{proof}

This result shows that if we have a solution of $\eqref{schr}$ with fast decay at time $t=0$, the solution has fast decay at any future time, although the decay gets worse with time. Our aim now is to use also the fast decay at time $t=1$ to improve the decay at future times.

\begin{proposition}\label{prop:4.3}\ \\
Let $\gamma>0$ and $V$ a bounded potential. Let $u$ be a strong solution of \eqref{schr}
and assume that at times $t=0$ and $t=1$,
$$
\|(1+|x|)^{\gamma(1+|x|)}u(x,t)\|_2<+\infty,\ \ t\in\{0,1\}.
$$
Then, for all $t\in[0,1],\ \|(1+|x|)^{\gamma(1+|x|)}u(t)\|_2<+\infty.$
\end{proposition}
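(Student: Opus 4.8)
The plan is to run a logarithmic convexity argument for weighted $\ell^2$–norms, as in the Escauriaza–Kenig–Ponce–Vega scheme and in its discrete implementation in \cite{jlmp}. Put $\psi(x)=(1+|x|)^{\gamma(1+|x|)}$; since $\psi$ is unbounded I would work with the bounded truncations $\psi_N(x)=\bigl(1+\min(|x|,N)\bigr)^{\gamma(1+\min(|x|,N))}$, which increase pointwise to $\psi$ and satisfy $\psi_N\le\psi$. For each $N$ the function $f_N(\cdot,t):=\psi_N\,u(\cdot,t)$ lies in $\ell^2(\tree)$, $H_N(t):=\norm{f_N(t)}_2^2<\infty$, and differentiating \eqref{schr} gives $\partial_tf_N=\mathcal S_Nf_N+\mathcal A_Nf_N+iVf_N$, where — exactly as in the proof of the auxiliary Lemma above, now with $\phi_t=0$ because $\psi_N$ is time–independent — $\mathcal S_N$ (the $\sinh$–part) is symmetric and $\mathcal A_N$ (the $\cosh$–part minus $iI$) is skew–symmetric. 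The whole proof then reduces to the bound
\[
\partial_t^2\log H_N(t)\ \ge\ -C,\qquad t\in(0,1),
\]
with $C$ \emph{independent of $N$}: convexity of $t\mapsto\log H_N(t)+\tfrac C2t^2$ yields $H_N(t)\le e^{C/8}H_N(0)^{1-t}H_N(1)^t\le e^{C/8}\norm{\psi u(0)}_2^{2(1-t)}\norm{\psi u(1)}_2^{2t}$, and letting $N\to\infty$ by monotone convergence gives $\norm{\psi u(t)}_2^2\le e^{C/8}\norm{\psi u(0)}_2^{2(1-t)}\norm{\psi u(1)}_2^{2t}<\infty$.

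For the convexity inequality I would use the abstract identity: for $\partial_tf=\mathcal Sf+\mathcal Af+G$ with $\mathcal S=\mathcal S^{*}$, $\mathcal A=-\mathcal A^{*}$,
\[
H''H-(H')^2=2H\,\langle(\partial_t\mathcal S+[\mathcal S,\mathcal A])f,f\rangle+4\bigl(\norm{\mathcal Sf}_2^2\norm f_2^2-\langle\mathcal Sf,f\rangle^2\bigr)+(\text{terms in }G),
\]
the Cauchy–Schwarz defect in the middle being $\ge0$. Since $\partial_t\mathcal S_N=0$, the crux is the \emph{operator} lower bound $[\mathcal S_N,\mathcal A_N]\ge -C_1 I$, uniformly in $N$, and this is where the tree enters decisively. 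The commutator $[\mathcal S_N,\mathcal A_N]$ is a real self–adjoint operator whose entries between vertices at distance $\ge 3$ vanish; the entries at distance $1$ also vanish, because a nonzero contribution would need a common neighbour of two adjacent vertices, i.e.\ a triangle, which $\tree$ does not contain. Thus only the diagonal and the distance–$2$ entries between \emph{siblings} (two vertices with a common parent) survive, and, grouping each sibling set into a block, $[\mathcal S_N,\mathcal A_N]$ becomes block–diagonal. On the block of the children of a vertex $y$ one computes, using that $\sinh\bigl(\phi_N(x)-\phi_N(x_p)\bigr)$ is of order $(1+|x|)^{\gamma}$, that the block is, up to a bounded error, $\tfrac{(1+|y|)^{2\gamma}}{2(q+1)^2}\,(qI-J)$ (with $J$ the all–ones matrix), which is $\ge -C_1$ uniformly; the truncation only modifies entries with $|x|\ge N-1$, where all the $\sinh$/$\cosh$ differences are bounded independently of $N$. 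Carrying out this commutator computation cleanly — the constants $\tfrac i{q+1}$, the different weight ratios $\psi_N(x)/\psi_N(x_p)$ and $\psi_N(x)/\psi_N(x_f)$, and the $qI-J$ (hence nonnegative–up–to–$O(1)$) structure — uniformly in $N$, is the step I expect to be the main obstacle.

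The remaining points are more routine. The bounded, diagonal potential is handled by keeping $G=iVf_N$ as a source of $\ell^2$–norm $\le\norm V_\infty\norm{f_N}_2$: after expanding, the potential contributes to $H''H-(H')^2$ only terms bounded by $\varepsilon(\norm{\mathcal S_Nf_N}_2^2\norm{f_N}_2^2-\langle\mathcal S_Nf_N,f_N\rangle^2)$ plus $C(\norm V_\infty+\norm V_\infty^2)H^2$, the first being absorbed into the nonnegative defect, so that $C$ stays independent of $N$. The auxiliary Lemma is used at the outset for a priori control: applied forward from $t=0$ with $\alpha=\min(\gamma,1)$, and backward from $t=1$ via the time–reversed, complex–conjugated solution $v(x,t)=\overline{u(x,1-t)}$ — which solves \eqref{schr} with the still bounded potential $\overline{V(x,1-t)}$ — it gives $\norm{(1+|x|)^{\alpha|x|/(1+t)}u(t)}_2<\infty$ and $\norm{(1+|x|)^{\alpha|x|/(2-t)}u(t)}_2<\infty$ for every $t$, which legitimises differentiating $H_N$ termwise and manipulating the sums; it also already proves the Proposition outright when $\gamma\le\frac23$, since then those two profiles cover $[0,1]$. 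Finally, since $u\in C^1$ only, $H_N$ is a priori just $C^1$, so one reads $\partial_t^2\log H_N\ge -C$ in the integrated sense (or first regularises $V$ in $t$ and removes the regularisation at the end), exactly as in the proof of the auxiliary Lemma.
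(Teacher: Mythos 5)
Your overall strategy --- logarithmic convexity of a weighted $\ell^2$-norm driven by a lower bound on $\langle[\mathcal S,\mathcal A]f,f\rangle$, with the positivity coming from the convexity of $n\mapsto\gamma(1+n)\log(1+n)$ and from the sibling structure of the tree --- is the paper's strategy, and your sibling-block computation, namely that $\sinh\bigl(2\phi(n{-}1)-2\phi(n)\bigr)J+q\sinh\bigl(2\phi(n{+}1)-2\phi(n)\bigr)I\succeq q\bigl[\sinh(2\phi(n{+}1)-2\phi(n))-\sinh(2\phi(n)-2\phi(n{-}1))\bigr]I\succeq0$, is exactly the cancellation $S_3^a+S_4+S_6\ge0$ in the paper. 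But the regularisation you chose to make this rigorous does not work. Capping the weight, $\psi_N(x)=(1+\min(|x|,N))^{\gamma(1+\min(|x|,N))}$, destroys the convexity of $\phi_N=\log\psi_N$ at $|x|=N$: the forward first difference $\phi_N(N{+}1)-\phi_N(N)$ drops to $0$ while the backward one remains $\phi(N)-\phi(N-1)\approx\gamma\log N$. Your assertion that near the truncation radius ``all the $\sinh/\cosh$ differences are bounded independently of $N$'' is therefore false: $\sinh\bigl(2(\phi(N)-\phi(N-1))\bigr)\sim\tfrac12N^{2\gamma}$, and the commutator acquires at $|x|=N$ an uncancelled negative diagonal contribution of order $-N^{2\gamma}\sum_{|x|=N}|f_N(x)|^2$ (plus an $O(N^{\gamma})$ grandparent--grandchild coupling across the cap). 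This cannot be absorbed into $-C\|f_N\|_2^2$ uniformly in $N$, and the a priori bounds from the auxiliary Lemma are too weak to show it vanishes as $N\to\infty$ --- controlling it would essentially require the conclusion you are trying to prove. Two secondary points: your block-diagonal description of $[\mathcal S,\mathcal A]$ omits the distance-$2$ grandparent--grandchild entries (the $x_{ff}$/$x_{pp}$ terms, the paper's $S_2$ and $S_5$), which are present and must be estimated (harmlessly, by Cauchy--Schwarz, since the second difference of $\phi$ is $O(1/n)$ away from the cap); and the claim that the forward/backward applications of the Lemma already settle the case $\gamma\le\frac23$ is not right, since the Lemma only applies with $\alpha\le\min(\gamma,1)$, whence $\max\bigl(\alpha/(1+t),\alpha/(2-t)\bigr)<\gamma$ at every interior time.

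The paper regularises in the opposite direction: it keeps the weight global but lowers the power of the logarithm, taking $\phi_b(n)=\gamma(1+n)\log^b(1+n)$ with $\tfrac12<b<1$. This preserves convexity everywhere (so the key positivity $\sinh(2\phi_b(n{+}1)-2\phi_b(n))\ge\sinh(2\phi_b(n)-2\phi_b(n{-}1))$ holds at every $n$, with no boundary defect), keeps the second differences uniformly bounded, and --- because $\log^b\ll\log$ --- makes $\|e^{\phi_b(|x|)}u(t)\|_2$ finite at every interior time directly from the auxiliary Lemma, which is what legitimises the formal computation. One then obtains $H_b(t)\le e^{Ct(1-t)}H_b(0)^{1-t}H_b(1)^t$ with $C$ independent of $b$ and lets $b\to1$ by monotone convergence. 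If you insist on a spatial cut-off, you would at least have to continue $\phi$ affinely beyond $|x|=N$ with slope $\phi(N)-\phi(N-1)$ rather than freeze it, so that the first differences remain nondecreasing; the superexponential a priori decay from the Lemma then keeps the corresponding weighted norms finite.
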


\begin{remark}
This is the tree analogue of \cite[Proposition 4.1]{jlmp} on $\Z$.
\end{remark}

\begin{proof}
For $1/2<b<1$, let $\phi_b(n)=\gamma(1+n)\log^b(1+n),\ n\in\mathbb{N}\cup\{0\}$.
Set $f=e^{\phi_b(|x|)}u$ and, as before $H(t)=\|f(t)\|_2^2$. The  previous lemma shows that $H(t)$ is finite for all $t$, so the subsequent formal computations are justified. We will show that, for some $C>0$,
\begin{eqnarray*}
H_b(t)&\le&e^{Ct(1-t)}H_b(0)^{1-t}H_b(1)^t\\
&\le&e^{Ct(1-t)} \|(1+|x|)^{\gamma(1+|x|)}u(0)\|_2^{2(1-t)} \|(1+|x|)^{\gamma(1+|x|)}u(1)\|_2^{2t}.
\end{eqnarray*}
The result will follow by letting $b\to1$ and applying the monotone convergence theorem.

In order to prove our claim, we write again $\partial_t f=\mathcal{S}f+\mathcal{A}f+iVf$ and, as shown in \cite{jlmp}, the claim follows from a lower bound
\begin{equation}
\label{eqconm}
\langle [\mathcal{S},\mathcal{A}]f,f\rangle \ge -C\|f\|^2,
\end{equation}
with $\mathcal{S},\mathcal{A}$ the operators defined in the previous lemma, in this case for the weight $e^{\phi_b}$. 
Since $\phi_b$ does not depend on $t$, it is easy to check that
$(q+1)^2 \langle[\mathcal{S},\mathcal{A}]f,f\rangle$ is
\begin{eqnarray*}
&=&\dst\sum_{x\in\tree}\sum_{y\sim x}\sum_{z\sim y}\sinh\big(2\phi_b(|y|)-\phi_b(|x|)-\phi_b(|z|)\big)f(z)\overline{f(x)}\\
&=&\sinh\big(2\phi_b(1)-2\phi_b(0)\big)|f(o)|^2\\
&&\!+2\sinh\big(2\phi_b(1)-\phi_b(0)-\phi_b(2)\big)\Re\sum_{z\in o_{ff}}f(z)\overline{f(o)}\\
&&\!+\sum_{x\in\tree\setminus\{o\}}\!\sinh\big(2\phi_b(|x|-1)-2\phi_b(|x|)\big)\!\sum_{z\in x_{pf}}\!f(z)\overline{f(x)}\\
&&\!+\sum_{x\in\tree\setminus\{o\}}\!\sinh\big(2\phi_b(|x|-1)-2\phi_b(|x|)\big)|f(x)|^2\\
&&\!+2\Re\sum_{x\in\tree\setminus\{o\}}\!\sinh\big(2\phi_b(|x|+1)-\phi_b(|x|)-\phi_b(|x|+2)\big)\!\sum_{z\in x_{ff}}\!f(z)\overline{f(x)}\\
&&\!+\sum_{x\in\tree\setminus\{o\}}\!q\sinh\big(2\phi_b(|x|+1)-2\phi_b(|x|)\big)|f(x)|^2\\
&=&S_1+\cdots+S_6.
\end{eqnarray*}

As for each $n$, there exists $\gamma_n$ such that, for every $1/2<b<1$, $|\Phi_b(n)|\leq\gamma_n$, there exists
a constant $C$ such that
$S_1,S_2\geq -C\norm{f}^2$.

As in \cite{jlmp}, there exists a constant $\kappa$ such that, for every $n$,
$|\sinh\big(2\phi_b(n+1)-\phi_b(n)-\phi_b(n+2)\big)|\leq\kappa$. Further, $|x_{ff}|= q^2$ so that Cauchy-Schwarz shows that there is
a constant $C$ such that $S_5\geq -C\norm{f}^2$.

Next, if $|x|\ge 2$,
\begin{eqnarray*}
\abs{\sum_{z\in x_{pf}}f(z)\overline{f(x)}}&\leq& \frac{1}{2}\sum_{z\in x_{pf}}\bigl( |f(z)|^2+|f(x)|^2\bigr)\\
&=&\frac{q-1}{2}|f(x)|^2+\frac{1}{2}\sum_{z\in x_{pf}}|f(z)|^2,
\end{eqnarray*}
while if $|x|=1$,
\begin{eqnarray*}
\abs{\sum_{z\in x_{pf}}f(z)\overline{f(x)}}\le\frac{q}{2}|f(x)|^2+\frac{1}{2}\sum_{z\in x_{pf}}|f(z)|^2.
\end{eqnarray*}
But then
\begin{eqnarray*}
S_3&\geq& -\frac{1}{2}\sum_{x\in\tree\setminus\{o\}}\sinh\big(2\phi_b(|x|)-2\phi_b(|x|-1)\big)\sum_{z\in x_{pf}}|f(z)|^2\\
&&-\frac{q-1}{2}\sum_{|x|\ge 2}\sinh\big(2\phi_b(|x|)-2\phi_b(|x|-1)\big)|f(x)|^2\\
&&-\frac{q}{2}\sum_{|x|=1}\sinh\big(2\phi_b(1)-2\phi_b(0)\big)|f(x)|^2\\
&=&-(q-1)\sum_{x\in\tree\setminus\{o\}}\sinh\big(2\phi_b(|x|)-2\phi_b(|x|-1)\big)|f(x)|^2\\
&&-\sum_{|x|=1}\sinh\big(2\phi_b(1)-2\phi_b(0)\big)|f(x)|^2\\
&=&S_3^a+S_3^b
\end{eqnarray*}
since each $x\in\tree\setminus\{o\}$ appears $q-1$ or $q$ times in the first sum if $|x|\ge2$ or $|x|=1$.
It follows that $S_3^b\ge -C\|f\|^2$ and
$$
S_3^a+S_4+S_6\geq
q\sum_{x\in\tree\setminus\{o\}}\psi_b(|x|)|f(x)|^2
\geq 0
$$
where
$$
\psi_b(n)=\sinh\big(2\phi_b(n+1)-2\phi_b(n)\big)-\sinh\big(2\phi_b(n)-2\phi_b(n-1)\big)\geq 0
$$
due to the properties of the function $(1+x)\log^b(1+x)$ for $x>0$ and $1/2<b<1$,
{\it see} \cite{jlmp}.
\end{proof}

As it happens in the continuous case, or in $\Z^d$, uniqueness holds from an argument related to Carleman inequalities. Here we prove the following Carleman inequality:

\begin{lemma}[Carleman inequality on the tree]\ \\
Let $\varphi:[0,1]\rightarrow\mathbb{R}$ be a smooth function, $\beta>0$ and $\gamma>\frac{1}{2\beta}$. There exists $R_0=R_0(\|\varphi\|_\infty+\|\varphi'\|_\infty+\|\varphi''\|_\infty,\beta,\gamma)$ such that, if $R>R_0$, $\alpha\ge \gamma R\log R$ and if
$g$ is a function on $\tree\times[0,1]$, $g\in C_0^1([0,1],\ell^2(\tree))$ has its support contained in the set
\[
\{(x,t):|x|/R+\varphi(t)\ge \beta\},
\]
then
\begin{multline*}
\sinh\frac{2\alpha}{R^2}\cosh\frac{4\alpha\beta}{R}\|e^{\alpha\left(\frac{|x|}{R}+\varphi\right)^2}g\mathbf{1}_{|x|\geq1}\|_{L^2_{x,t}}^2\\
\le (q+1)^2\|e^{\alpha\left(\frac{|x|}{R}+\varphi(t)\right)^2}(i\partial_t+\mathcal{L})g\|_{L^2_{x,t}}^2\\
+\int_0^1\sinh\frac{4\alpha}{R}\left(\frac{1}{2R}+\ffi\right)
\sum_{|x|=1}\abs{e^{\alpha\left(\frac{1}{R}+\varphi(t)\right)^2}g(x)}^2\d t.
\end{multline*}
\end{lemma}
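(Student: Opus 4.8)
The plan is to follow the now-classical scheme for Carleman estimates with Gaussian weights (Escauriaza–Kenig–Ponce–Vega, as adapted to the discrete line in \cite{jlmp,fbv}), transplanting it to the tree. Write $\varphi_\alpha(x,t)=\alpha\left(\frac{|x|}{R}+\varphi(t)\right)^2$ and set $f=e^{\varphi_\alpha}g$. A direct computation gives
\[
e^{\varphi_\alpha}(i\partial_t+\mathcal{L})(e^{-\varphi_\alpha}f)=\mathcal{A}f+\mathcal{S}f,
\]
where, as in the two lemmas already proved in this section, $\mathcal{S}$ is symmetric and $\mathcal{A}$ skew-symmetric on $\ell^2(\tree)$; here $\mathcal{S}$ collects $i\partial_t$ together with the $\sinh$ terms coming from conjugating $\mathcal{L}$ by $e^{-\varphi_\alpha}$, and $\mathcal{A}$ the $\cosh$ terms together with the $-i$ from $\mathcal{L}=I-\frac{1}{q+1}A_0$. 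Then the standard identity
\[
\|(\mathcal{A}+\mathcal{S})f\|_{L^2_{x,t}}^2=\|\mathcal{A}f\|_{L^2_{x,t}}^2+\|\mathcal{S}f\|_{L^2_{x,t}}^2+\langle[\mathcal{S},\mathcal{A}]f,f\rangle_{L^2_{x,t}}
\ge \langle[\mathcal{S},\mathcal{A}]f,f\rangle_{L^2_{x,t}}
\]
reduces everything to bounding the commutator from below. Multiplying through by $(q+1)^2$ accounts for the constant in the statement.

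**The commutator computation.** The term $\partial_t\varphi_\alpha$ contributes $\frac{2\alpha}{R}\varphi'(t)\left(\frac{|x|}{R}+\varphi(t)\right)$ to $\mathcal{S}$; commuting it with $\mathcal{A}$ produces first-order difference quotients of $\partial_t\varphi_\alpha$ in $|x|$, which are $O(\alpha/R^2)$ and are dominated once $R>R_0$. The main positive term comes from the $\mathcal{L}$-part of the commutator: as in the Proposition just proved, the triple sum over $x\sim y\sim z$ produces, for $|x|=|z|=|y|-1$ or $|x|=|z|=|y|+1$, the combination
\[
\sinh\!\big(2\varphi_\alpha(|x|+1)-2\varphi_\alpha(|x|)\big)-\sinh\!\big(2\varphi_\alpha(|x|)-2\varphi_\alpha(|x|-1)\big),
\]
which is a second difference of $n\mapsto\varphi_\alpha(n,t)$ passed through $\sinh$, hence $\ge 0$ by convexity of $n\mapsto\alpha(n/R+\varphi)^2$, and in fact comparable to $\sinh\frac{2\alpha}{R^2}\cdot\cosh\big(4\alpha(\frac{|x|}{R}+\varphi)/R\big)$; using that $|x|/R+\varphi\ge\beta$ on the support of $g$ gives the factor $\sinh\frac{2\alpha}{R^2}\cosh\frac{4\alpha\beta}{R}$ in front of $\|f\mathbf{1}_{|x|\ge1}\|^2$. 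The ``cross'' terms involving $x_{pf}$ and $x_{ff}$ are handled exactly as $S_3,S_5$ in the Proposition: one absorbs $\sum_{z\in x_{pf}}f(z)\overline{f(x)}$ by Cauchy–Schwarz (using \eqref{eq:usefull} so each vertex is counted the right number of times), and the $x_{pf}$ contribution, once redistributed, combines with the diagonal $\sinh$ terms to leave a nonnegative multiple of $q\,\psi_\alpha(|x|,t)|f(x)|^2$ exactly as in $S_3^a+S_4+S_6$ there.

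**The boundary term at $|x|=1$.** The reason a boundary term survives — and the place where the tree differs from both $\Z$ and the interior computation of the Proposition — is the vertex $o$: in the identities above the manipulations at $|x|=1$ leave an uncancelled piece, because $o$ has $q+1$ neighbors of length $1$ rather than the generic pattern. Summing over $|x|=1$ and integrating in $t$, this residual term is precisely
\[
\int_0^1\sinh\frac{4\alpha}{R}\left(\frac{1}{2R}+\varphi(t)\right)\sum_{|x|=1}\big|e^{\alpha(1/R+\varphi(t))^2}g(x)\big|^2\d t,
\]
after writing the relevant $\sinh$ of a first difference $\varphi_\alpha(1,t)-\varphi_\alpha(0,t)=\alpha\big((1/R+\varphi)^2-\varphi^2\big)=\frac{\alpha}{R}(1/R+2\varphi)$ in the stated normalized form; this is why it appears on the right-hand side rather than being absorbed. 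The condition $\gamma>\frac{1}{2\beta}$ together with $\alpha\ge\gamma R\log R$ is used at the very end to guarantee $R_0$ can be chosen so that the error terms (the $\partial_t$-commutator remainder and the lower-order pieces of the $\sinh$ second differences) are genuinely smaller than the main term $\sinh\frac{2\alpha}{R^2}\cosh\frac{4\alpha\beta}{R}$; one checks that this main coefficient is exponentially large in $\alpha/R\sim\gamma\log R$ while the errors are polynomial, so the inequality survives for $R>R_0$.

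**Main obstacle.** The routine part is the bookkeeping of the triple sum $\sum_{x\sim y\sim z}$; the genuinely delicate point is the careful accounting at $o$ and at $S_1$: one must verify that the only term that refuses to be controlled by $\|f\|^2$ or absorbed into the positive bulk term is exactly the $|x|=1$ sum with the stated coefficient, with the correct sign and no spurious $|f(o)|^2$ left over. I would do this by splitting the triple sum according to whether the middle vertex $y$ equals $o$, is at distance $1$ from $o$, or is generic, treating the first two cases by hand; this mirrors the $S_1,\dots,S_6$ decomposition already carried out but must be redone with the $t$-dependent, $\alpha$-scaled weight and tracked all the way to the boundary.
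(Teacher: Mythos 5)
Your proposal follows essentially the same route as the paper: conjugate by the Gaussian weight, split the conjugated operator into symmetric and skew-symmetric parts, bound $\langle[\mathcal{S},\mathcal{A}]f,f\rangle$ from below by decomposing the triple sum over $x\sim y\sim z$ according to the position of the middle vertex, extract the positive bulk term $2\sinh\frac{2\alpha}{R^2}\cosh\frac{4\alpha}{R}\left(\frac{|x|}{R}+\varphi\right)$ from the second difference of the weight, absorb the remaining errors using $\alpha\ge\gamma R\log R$ with $\gamma>\frac{1}{2\beta}$, and trace the surviving $|x|=1$ term to the anomaly at the root (each level-one vertex has $q$ sisters instead of $q-1$), exactly as the paper does. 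Two sketch-level slips that would wash out when writing the details: for the operator $i\partial_t+\mathcal{L}$ it is the $\cosh$ part that is symmetric and the $\sinh$ part that is skew-symmetric (you have the assignment reversed, which would flip the sign of the commutator if taken literally), and the first differences of $\partial_t\varphi_\alpha$ are of size $\alpha/R$ rather than $\alpha/R^2$ --- they are still absorbed, but only thanks to the gain $\cosh\frac{4\alpha\beta}{R}\gtrsim R^{4\gamma\beta}$ over the $\cosh\frac{2\alpha}{R}(\cdot)$ in the error, which is precisely where $\gamma>\frac{1}{2\beta}$ enters.
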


\begin{remark}
In the case of $\Z$ or, in general, of $\Z^d$, where the combinatorics makes the study of the problem much easier this corresponds to \cite[Lemma 2.1]{fbv}. Further, on the tree, the inequality contains an extra-term. Fortunately, this term will be harmless.
\end{remark}

\begin{proof} Let $\phi$ be defined by $\phi(n)=\alpha\dst\left(\frac{n}{R}+\ffi(t)\right)^2$.
For $f=e^{\phi}g$ we have,
$$
e^{\phi}(i\partial_t+\mathcal{L})g=\mathcal{S}f+\mathcal{A}f,
$$
where
 \begin{eqnarray*}
 \mathcal{S}f&=& i\partial_tf+\frac{1}{q+1}\sum_{y\sim x}\cosh(\phi(x,t)-\phi(y,t))f(y,t)-f,\\
 \mathcal{A}f&=& -i\phi_tf+\frac{1}{q+1}\sum_{y\sim x}\sinh(\phi(x,t)-\phi(y,t))f(y,t).
 \end{eqnarray*}

We need to give a lower bound for the commutator, which immediately implies the result using the fact that
$$
\|e^{\alpha\left(\frac{|x|}{R}+\varphi(t)\right)^2}(i\partial_t+\mathcal{L})g\|_{L^2_{x,t}}^2\ge  \langle[\mathcal{S},\mathcal{A}]f,f\rangle.
$$

To simplify notation, we will not explicitly write the dependence of $f$ on the time variable $t$ so that $f(x)$ means $f(x,t)$,
$x\in\tree$, $t\in[0,1]$.
 A simple computation shows that
\begin{equation}
\label{cgena}
 \langle[\mathcal{S},\mathcal{A}]f,f\rangle=\int_0^1 S(t)\,\mbox{d}t\
\end{equation}
where
 \begin{multline}
S(t):=\dst\sum_{x\in\tree}\phi_{tt}(x)|f(x)|^2\\
+\frac{2}{q+1}\sum_{x\in\tree}\sum_{y\sim x}(\phi_t(x)-\phi_t(y))\cosh(\phi(x)-\phi(y))f(y)\overline{f(x)}\\
+\dst\frac{1}{(q+1)^2}\sum_{x\in\tree}\sum_{y\sim x}\sum_{z\sim y}\sinh(2\phi(y)-\phi(x)-\phi(z))f(z)\overline{f(x)}.
\label{cgen}
 \end{multline}

As in the previous proof, we split them into sums over mothers and daughters. 
Recall that the root has only daughters while the rest
of the points in the tree have a single mother and $q$ daughters. Further,
the function $\phi(x,t)$ only depends on $|x|$, 
the distance of a point in the tree to the root $o$. We therefore decompose
the sums in \eqref{cgen} as follows: $S(t)=S_1+\cdots+S_7$ where

--- The first sum in \eqref{cgen} is $\dst S_1=\sum_{n\ge0}\sum_{|x|=n}\phi_{tt}(n)|f(x)|^2$.

--- For the second sum in \eqref{cgen}, each pair $x\sim y$ appears twice, once $|x|=|y|+1$, once with
$|x|=|y|-1$. Therefore, this sum can be rewritten as
$$
S_2=\frac{4}{q+1}\Im\sum_{n\ge1}\sum_{|x|=n}(\phi_t(n)-\phi_t(n-1))\cosh(\phi(n)-\phi(n-1))f(x_p)\overline{f(x)}.
$$

--- For the last sum in \eqref{cgen}, we need to distinguish more cases:

a) $x=o$, $y$ any daughter and $z=o$. This happens $q+1$ times and leads to
$$
S_3=\frac{1}{q+1}\sinh 2\bigl(\phi(1)-\phi(0)\bigr)|f(o)|^2;
$$

b) $x\in\tree\setminus\{o\}$, {\it i.e.} $n:=|x|\geq 1$ $y$ is one of the $q$ daughters of $x$ and $z=x$
which leads to
$$
S_4=\frac{q}{(q+1)^2}\sum_{n\ge1}\sum_{|x|=n}\sinh2\bigl(\phi(n+1)-\phi(n)\bigr)|f(x)|^2
$$
while if $y$ is the mother of $x$ and $z=x$, we get
$$
S_5=\frac{1}{(q+1)^2}\sum_{n\ge1}\sum_{|x|=n}\sinh2\bigl(\phi(n-1)-\phi(n)\bigr)|f(x)|^2;
$$

c) $x\in\tree\setminus\{o\}$, {\it i.e.} $n:=|x|\geq 1$, $y$ is the mother of $x$ and $z$ is any
of the sisters of $x$, we get
$$
S_6=\frac{1}{(q+1)^2}\sum_{n\ge1}\sum_{|x|=n}\sum_{z\in x_{pf}}\sinh2\bigl(\phi(n-1)-\phi(n)\bigr)f(z)\overline{f(x)};
$$

--- Finally, for all other terms $x$ is the grand-mother of $z$ and each such couple $(x,z)$ appears twice.
As $|z|\geq 2$, this may be written as
$$
S_7=\frac{2}{(q+1)^2}\Re\sum_{n\ge2}\sum_{|x|=n}\sinh\bigl(2\phi(n-1)-\phi(n)-\phi(n-2)\bigr)f(x)\overline{f(x_{pp})}.
$$

Before estimating those quantities, as $\phi(n)=\alpha\left(\frac{n}{R}+\varphi(t)\right)^2$, we obtain
\begin{eqnarray*}
\phi_t(n)&=&2\alpha\left(\frac{n}{R}+\varphi\right)\varphi'\\
\phi_{tt}(n)&=&2\alpha\left[\left(\frac{n}{R}+\varphi\right)\varphi''+(\varphi')^2\right]\\
\phi_{t}(n)-\phi_{t}(n-1)&=&\frac{2\alpha}{R}\varphi'\\
\phi(n-1)-\phi(n)&=&-\frac{2\alpha}{R}\left(\frac{n-1/2}{R}+\varphi\right)\\
\phi(n)+\phi(n+2)-2\phi(n+1)&=&\frac{2\alpha}{R^2}.\\
\end{eqnarray*}

Let us now estimate $S_1$ to $S_7$. We will treat them from the simplest to the most involved one rather than the order in which they appeared in the above decomposition. We start with $S_1$, which can be bounded by
\begin{equation}
\label{eq:estS1}
S_1\geq -2\|\ffi''\|_\infty\alpha\sum_{n\geq0}\abs{\frac{n}{R}+\ffi}\sum_{|x|=n}|f(x)|^2.
\end{equation}

To estimate $S_7$, we write $2\Re(f(x)\overline{f(x_{pp})}=-|f(x)-f(x_{pp})|^2+|f(x)|^2+|f(x_{pp})|^2$, then
\begin{eqnarray}
S_7&=&\frac{\sinh\frac{2\alpha}{R^2}}{(q+1)^2}\sum_{n\ge2}\sum_{|x|=n}\bigl(|f(x)-f(x_{pp})|^2-|f(x)|^2-|f(x_{pp})|^2\bigr)\nonumber\\
&\geq&-\frac{\sinh\frac{2\alpha}{R^2}}{(q+1)^2}\left(\sum_{n\ge2}\sum_{|x|=n}|f(x)|^2+\sum_{n\ge2}\sum_{|x|=n}|f(x_{pp})|^2\right)\nonumber\\
&\geq&-\frac{\sinh\frac{2\alpha}{R^2}}{(q+1)^2}\left(q(q+1)|f(o)|^2+q^2\sum_{|x|=1}|f(x)|^2\right.\nonumber\\
&&\qquad\left.+(q^2+1)\sum_{n\ge2}\sum_{|x|=n}|f(x)|^2\right)\label{eq:estS7}
\end{eqnarray}
since $o$ has $q(q+1)$ grand-daughters, it appears $q(q+1)$ times as an $x_{pp}$,
if $|x|\geq 1$, it has $q^2$ grand-daughters and thus will appear $q^2$ times in the second sum.

Next, for $S_6$, we use that $f(z)\overline{f(x)}\ge -\frac{1}{2}(|f(x)|^2+|f(z)|^2)$ to obtain
\begin{eqnarray*}
S_6&\geq& -\frac{1}{2(q+1)^2}\sum_{n\ge1}\abs{\sinh2\bigl(\phi(n-1)-\phi(n)\bigr)}\times\\
&&\qquad\qquad\times\sum_{|x|=n}\sum_{z\in x_{pf}}(|f(x)|^2+|f(z)|^2)\\
&=&-\frac{1}{(q+1)^2}\left(
q\abs{\sinh2\bigl(\phi(0)-\phi(1)\bigr)}\sum_{|x|=1}|f(x)|^2\right.\\
&&\qquad\qquad\left.+(q-1)\sum_{n\ge1}\abs{\sinh2\bigl(\phi(n-1)-\phi(n)\bigr)}\sum_{|x|=n}|f(x)|^2\right).
\end{eqnarray*}
Here we use the fact that $x_{pf}$ has $q$ elements if $|x|=1$ and $q-1$ elements otherwise 
for $\sum_{|x|=n}\sum_{z\in x_{pf}}|f(x)|^2$ and we use \eqref{eq:usefull} for the second sum.
Finally, using the expression of $\phi$, we get
\begin{multline}
S_6\geq 
-\frac{1}{(q+1)^2}\left(
q\sinh\frac{4\alpha}{R}\left(\frac{1}{2R}+\ffi\right)\sum_{|x|=1}|f(x)|^2\right.\\
\left.+(q-1)\sum_{n\ge2}\sinh\frac{4\alpha}{R}\left(\frac{n-1/2}{R}+\ffi\right)\sum_{|x|=n}|f(x)|^2\right).
\label{eq:estS6}
\end{multline}

Now, for $S_2$, let us first introduce
$$
\Psi(n)=\cosh\bigl(\phi(n)-\phi(n-1)\bigr)
$$
and
$$
\Sigma_n=\sum_{|x|=n}\bigl(q^{1/2}|f(x)|^2+q^{-1/2}|f(x_p)|^2\bigr).
$$
We use the expression of $\phi_t$ and the fact that
$$
2|f(x)f(x_p)|\le q^{1/2}|f(x)|^2+q^{-1/2}|f(x_p)|^2
$$
to bound $S_2$ by
\begin{eqnarray*}
&\geq&\!-\frac{4\alpha|\ffi'|}{(q+1)R}\sum_{n\geq 1}\Psi(n)\Sigma_n\\
&=&\!-\frac{4\alpha|\ffi'|}{q^{1/2}R}\Psi(1)|f(o)|^2-\frac{4q^{1/2}\alpha |\ffi'|}{(q+1)R}
\sum_{n\geq 1}\sum_{|x|=n}\bigl[\Psi(n)+\Psi(n+1)\bigr]|f(x)|^2
\end{eqnarray*}
since $o$ will appear $q+1$ times as an $x_p$ and each $x$ with $|x|\geq 1$ will appear once as an $x$ and $q$ times as an $x_p$.
Using the expression of $\phi$ we conclude that
\begin{multline}
\label{eq:estS2}
S_2\geq -\frac{4\alpha\|\ffi'\|_\infty}{Rq^{1/2}}\cosh\frac{2\alpha}{R}\left(\frac{1}{2R}+\ffi\right)|f(o)|^2\\
-\frac{4q^{1/2}\alpha\|\ffi'\|_\infty}{(q+1)R}\cosh\frac{\alpha}{R^2}
\sum_{n\geq 1}\cosh\frac{2\alpha}{R}\left(\frac{n}{R}+\ffi\right)\sum_{|x|=n}|f(x)|^2.
\end{multline}

Next, write
\begin{eqnarray*}
S_4&=&\frac{q-1}{(q+1)^2}\sum_{n\ge1}\sum_{|x|=n}\sinh2\bigl(\phi(n+1)-\phi(n)\bigr)|f(x)|^2\\
&&+\frac{1}{(q+1)^2}\sum_{n\ge1}\sum_{|x|=n}\sinh2\bigl(\phi(n+1)-\phi(n)\bigr)|f(x)|^2\\
&=&S_4^a+S_4^b.
\end{eqnarray*}
We will group $S_4^b$ and $S_5$ noticing that
\begin{multline*}
\sinh\frac{4\alpha}{R}\left(\frac{n+1/2}{R}+\varphi\right)-\sinh\frac{4\alpha}{R}\left(\frac{n-1/2}{R}+\varphi\right)\\
=2\cosh\frac{4\alpha}{R}\left(\frac{n}{R}+\varphi\right)\sinh\frac{2\alpha}{R^2}.
\end{multline*}
This leads to
\begin{equation}
\label{eq:estS4S5}
S_4^b+S_5
\geq
\frac{2}{(q+1)^2}\sinh\frac{2\alpha}{R^2}\sum_{n\ge1}\cosh\frac{4\alpha}{R}\left(\frac{n}{R}+\varphi\right)\sum_{|x|=n}|f(x)|^2.
\end{equation}

We are now in position to estimate $S_1+\cdots+S_7$.
Let us first isolate all terms containing $|f(o)|^2$.
They appear in $\eqref{eq:estS1}$, $S_3$, \eqref{eq:estS7} and \eqref{eq:estS2}.

The factor of $|f(o)|^2$ is
\begin{multline*}
A:=-2\alpha\|\ffi\|_\infty\|\ffi''\|_\infty
+\frac{1}{q+1}\sinh \frac{4\alpha}{R}\left(\frac{1}{2R}+\ffi\right)\\
-\frac{q\sinh\frac{2\alpha}{R^2}}{q+1}
-\frac{4\alpha\|\ffi'\|_\infty}{Rq^{1/2}}\cosh\frac{2\alpha}{R}\left(\frac{1}{2R}+\ffi\right).
\end{multline*}

Now, the hypothesis of the lemma show that, if $f(o)\not=0$, then $\ffi\geq \beta>0$.
Further, as $\alpha>\dst\frac{1}{2\beta}R\log R$, it is easy to see that the dominating term in $A$
is the second one and that the other three can be absorbed in it provided $R$ is large enough.
Thus $A\geq 0$ 
if $R$ is large enough
(depending on $q$, $\|\ffi\|_\infty,\|\ffi'\|_\infty,\|\ffi''\|_\infty$ and $\beta$).

Next, we compute the factor of $\dst \sum_{|x|=1}|f(x)|^2$. The one stemming from $S_4^a$ and the one appearing in \eqref{eq:estS6}
give
\begin{multline*}
\ent{\frac{q-1}{(q+1)^2}\sinh\frac{4\alpha}{R}\left(\frac{3}{2R}+\ffi\right)
-\frac{q}{(q+1)^2}\sinh\frac{4\alpha}{R}\left(\frac{1}{2R}+\ffi\right)}\\
\geq-\frac{1}{(q+1)^2}\sinh\frac{4\alpha}{R}\left(\frac{1}{2R}+\ffi\right).
\end{multline*}
The remaining terms for $|x|=1$ come from \eqref{eq:estS4S5}, \eqref{eq:estS2},
\eqref{eq:estS7} and \eqref{eq:estS1}. The factor of $\dst \sum_{|x|=1}|f(x)|^2$ stemming from those terms is
\begin{multline*}
\frac{2}{(q+1)^2}\sinh\frac{2\alpha}{R^2}\cosh\frac{4\alpha}{R}\left(\frac{1}{R}+\varphi\right)\\
-\frac{4q^{1/2}\alpha\|\ffi'\|_\infty}{(q+1)R}\cosh\frac{\alpha}{R^2}
\cosh\frac{2\alpha}{R}\left(\frac{1}{R}+\ffi\right)\\
-\frac{q^2\sinh\frac{2\alpha}{R^2}}{(q+1)^2}
-2\|\ffi''\|_\infty\alpha\abs{\frac{1}{R}+\ffi}.
\end{multline*}
The three last terms are again absorbed in the first one ({\it see} \cite{fbv} for details).
We are thus left with
\begin{multline*}
\frac{1}{(q+1)^2}\sinh\frac{2\alpha}{R^2}\cosh\frac{4\alpha}{R}\left(\frac{1}{R}+\varphi\right)
\sum_{|x|=1}|f(x)|^2\\
\geq \frac{1}{(q+1)^2}\sinh\frac{2\alpha}{R^2}\cosh\frac{4\alpha\beta}{R}
\sum_{|x|=1}|f(x)|^2
\end{multline*}
because of the support property of $f$.

For $n\geq 2$ the factor of $\dst \sum_{|x|=n}|f(x)|^2$ come from

--- first those from $S_4^a$ and from \eqref{eq:estS6} which now are
$$
\frac{q-1}{(q+1)^2}\sinh\frac{4\alpha}{R}\left(\frac{n+1/2}{R}+\ffi\right)
-\frac{q-1}{(q+1)^2}\sinh\frac{4\alpha}{R}\left(\frac{n-1/2}{R}+\ffi\right)
\geq0,
$$

--- the remaining ones coming from \eqref{eq:estS1}, \eqref{eq:estS7}, \eqref{eq:estS2} and \eqref{eq:estS4S5}
\begin{multline*}
-2\|\ffi''\|_\infty\alpha\abs{\frac{n}{R}+\ffi}-\frac{q^2+1}{(q+1)^2}\sinh\frac{2\alpha}{R^2}\\
-\frac{4q^{1/2}\alpha\|\ffi'\|_\infty}{(q+1)R}\cosh\frac{\alpha}{R^2}\cosh\frac{2\alpha}{R}\left(\frac{n}{R}+\ffi\right)\\
+\frac{2}{(q+1)^2}\sinh\frac{2\alpha}{R^2}\cosh\frac{4\alpha}{R}\left(\frac{n}{R}+\varphi\right).
\end{multline*}
The first three terms are again absorbed in the last one ({\it see} \cite{fbv} for details).
We are thus left with
\begin{multline*}
\frac{1}{(q+1)^2}\sinh\frac{2\alpha}{R^2}\sum_{n\geq 2}\cosh\frac{4\alpha}{R}\left(\frac{n}{R}+\varphi\right)
\sum_{|x|=n}|f(x)|^2\\
\geq \frac{1}{(q+1)^2}\sinh\frac{2\alpha}{R^2}\cosh\frac{4\alpha\beta}{R}\sum_{n\geq 2}\sum_{|x|=n}|f(x)|^2
\end{multline*}
because of the support property of $f$.

In summary, if $R$ is large enough,
\begin{multline*}
\langle [\mathcal{S},\mathcal{A}]f,f\rangle
\geq 
-\int_0^1\frac{1}{(q+1)^2}\sinh\frac{4\alpha}{R}\left(\frac{1}{2R}+\ffi\right)
\sum_{|x|=1}|f(x)|^2\,\mbox{d}t\\
+\frac{1}{(q+1)^2}\sinh\frac{2\alpha}{R^2}\cosh\frac{4\alpha\beta}{R}\int_0^1\sum_{n\geq 1}\sum_{|x|=n}|f(x)|^2\,\mbox{d}t
\end{multline*}
as announced.
\end{proof}

Even though we need a correction term in order to give the Carleman estimate, we can adapt the argument of the proof of
\cite[Theorem 1.1]{fbv} to give again a lower bound for solutions of Schr\"odinger evolutions on trees.

\begin{theorem}[Lower bound for solutions of Schr\"odinger equations]\label{th:4.7}\ \\
Let $q\geq 2$, $A,L,\eta>0$ then there exists $R_0=R_0(q,A,L)>0$ and $c=c(q,\eta)$ such that

--- if $V$ is a bounded function on $\tree$ with
\[
\|V\|_{\infty}=\sup_{t\in[0,1],x\in\tree}\{|V(x,t)|\}\le L,
\]

--- and $u\in C^1([0,1]:\ell^2(\tree))$ is a strong solution of
$$
\partial_t u=i(\lll u+Vu)
$$
that satisfies the bounds
\[
\int_0^1\sum_{x\in\tree}|u(x,t)|^2\,\d t\le A^2\quad,\quad
\int_{1/2-1/8}^{1/2+1/8}|u(x_0,t)|^2\,\d t\ge1
\]
for some $x_0$ with $|x_0|=2$.

Then for $R\ge R_0$,
\[
\lambda(R)\equiv \left(\int_{0}^{1}\sum_{\lfloor R\rfloor-1\le |x|\le \lfloor R\rfloor+1}|u(x,t)|^2\,\d t\right)^{1/2}\ge c e^{-(1+\eta)R \log R}.
\]
\end{theorem}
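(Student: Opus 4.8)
The plan is to run the Escauriaza--Kenig--Ponce--Vega scheme adapted to the tree: localise $u$ by a space--time cut-off, feed the result into the Carleman inequality just proved, and read off the lower bound for $\lambda(R)$ from the one error term that cannot be absorbed. Given $\eta>0$, I would fix $\beta$ large with $\tfrac1{2\beta}<\tfrac\eta4$, then $\gamma=\tfrac1{2\beta}+\eps$ with $\eps>0$ tiny, a small $\kappa>0$, and take $\alpha=\gamma R\log R$. Fix once and for all a smooth $\varphi\colon[0,1]\to\R$ with $\varphi\equiv\beta+\kappa$ on $[3/8,5/8]$, $\varphi\le\beta+\kappa$ everywhere, and $\varphi\equiv-\tfrac12$ on $[0,1/8]\cup[7/8,1]$ (so $\|\varphi\|_\infty,\|\varphi'\|_\infty,\|\varphi''\|_\infty$ are then fixed constants). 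Put $g=\theta u$ with $\theta(x,t)=\theta_R(|x|)\,\chi\!\bigl(|x|/R+\varphi(t)\bigr)$, where $\theta_R(n)=1$ for $n\le\lfloor R\rfloor-1$, $\theta_R(n)=0$ for $n\ge\lfloor R\rfloor+1$, affine between, and $\chi\colon\R\to[0,1]$ is smooth with $\chi\equiv0$ on $(-\infty,\beta]$, $\chi\equiv1$ on $[\beta+\kappa,\infty)$. For $R$ large, $g\in C_0^1([0,1],\ell^2(\tree_q))$ (finitely supported in $x$ for each $t$, and $g(\cdot,0)=g(\cdot,1)=0$ since $|x|/R+\varphi(t)\le\tfrac12+\tfrac1R<\beta$ there) and $\supp g\subset\{|x|/R+\varphi(t)\ge\beta\}$, so the Carleman inequality applies with these $\varphi,\beta,\gamma$ and $\alpha=\gamma R\log R$.

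Writing $(i\partial_t+\mathcal L)g=-\theta Vu+(i\partial_t\theta)u+[\mathcal L,\theta]u$ with $[\mathcal L,\theta]u(x)=\tfrac1{q+1}\sum_{y\sim x}\bigl(\theta(x)-\theta(y)\bigr)u(y)$, I would bound the right-hand side of the Carleman inequality term by term. The potential term is $\le(q+1)^2L^2\|e^{\alpha(|x|/R+\varphi)^2}g\|_{L^2_{x,t}}^2$; its part on $\{|x|\ge1\}$ is absorbed into the left side once $c_1:=\sinh\tfrac{2\alpha}{R^2}\cosh\tfrac{4\alpha\beta}{R}>2(q+1)^2L^2$, true for $R$ large since $c_1\sim(\log R)R^{4\gamma\beta-1}\to\infty$ (because $4\gamma\beta>2$), and its part at the root is $\le(q+1)^2L^2 e^{2\alpha(\beta+\kappa)^2}A^2$. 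The term $(i\partial_t\theta)u$ and the piece of $[\mathcal L,\theta]u$ carried by $\chi'$ live on $\{|x|/R+\varphi(t)\approx\beta\}$, hence have weight $\le e^{2\alpha(\beta+\kappa)^2}R^{o(1)}$ and contribute $\le C(\chi,\varphi)e^{2\alpha(\beta+\kappa)^2}R^{o(1)}A^2$ by $\|u\|_{L^2_{x,t}}\le A$; the extra $\sum_{|x|=1}$ boundary term is similarly $\le R^{8\gamma(\beta+\kappa)+o(1)}e^{2\alpha(\beta+\kappa)^2}A^2$. Finally, the piece of $[\mathcal L,\theta]u$ carried by $\theta_R'$ lives on $\{\lfloor R\rfloor-1\le|x|\le\lfloor R\rfloor+1\}$, with weight $\le e^{2\alpha(1+1/R+\beta+\kappa)^2}$ and $\ell^2_x$-norm of $u$ equal to $\lambda(R)$; this is the term I would keep, contributing $\le C_q\,e^{2\alpha(1+1/R+\beta+\kappa)^2}\lambda(R)^2$.

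On the other hand the left side of the Carleman inequality is at least $c_1\,e^{2\alpha(2/R+\beta+\kappa)^2}\int_{3/8}^{5/8}|u(x_0,t)|^2\d t\ge c_1\,e^{2\alpha(2/R+\beta+\kappa)^2}$, because $|x_0|=2\le\lfloor R\rfloor-1$ and $\varphi\equiv\beta+\kappa$ on $[3/8,5/8]$ force $\theta(x_0,\cdot)\equiv1$ there, and $\int_{1/2-1/8}^{1/2+1/8}|u(x_0,t)|^2\,\d t\ge1$. Since $e^{2\alpha(2/R+\beta+\kappa)^2}=R^{8\gamma(\beta+\kappa)+o(1)}e^{2\alpha(\beta+\kappa)^2}$ while $c_1\sim(\log R)R^{4\gamma\beta-1}$, for $R$ large (depending on $q,A,L$ and on the fixed $\beta,\gamma,\kappa$) each of the three ``small'' errors above is a tiny fraction of the left side; absorbing them leaves
\[
c_1\,e^{2\alpha(2/R+\beta+\kappa)^2}\le C_q\,e^{2\alpha(1+1/R+\beta+\kappa)^2}\lambda(R)^2 .
\]
Using $(2/R+\beta+\kappa)^2-(1+1/R+\beta+\kappa)^2=-(1-\tfrac1R)\bigl(1+\tfrac3R+2(\beta+\kappa)\bigr)$ and taking square roots gives $\lambda(R)\ge C_q^{-1/2}\sqrt{c_1}\,R^{\gamma(2\beta+2\kappa-2)+o(1)}\,e^{-\gamma(1+2\beta+2\kappa)R\log R}$, whose prefactor is $\ge1$ for $R$ large (its exponent of $R$ is $4\gamma\beta+2\gamma\kappa-2\gamma-\tfrac12>\tfrac32-2\gamma>0$). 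Choosing $\eps,\kappa$ so small that $\gamma(1+2\beta+2\kappa)<1+\eta$ (possible since $\gamma(1+2\beta)\to\frac{1+2\beta}{2\beta}=1+\tfrac1{2\beta}<1+\tfrac\eta4$ as $\eps\to0$), one obtains $\lambda(R)\ge e^{-(1+\eta)R\log R}$ for $R\ge R_0$, which is the claim, with a constant $c=c(q,\eta)$.

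The delicate point is the joint choice of $\varphi$ and $\theta$. One must let $\varphi$ dip below $\beta-1$ near $t=0,1$ so that $g$ vanishes there \emph{automatically}, avoiding a separate time cut-off --- such a cut-off would create an error at $|x|\approx R$ near the endpoints with weight $e^{2\alpha(1+\beta)^2}$, far larger than the good term $e^{2\alpha\beta^2}$. One must also keep $\varphi$ constant at its maximum $\beta+\kappa$ throughout $[3/8,5/8]$, so that the good lower bound at $x_0$ (weight $e^{2\alpha(2/R+\beta+\kappa)^2}$) is not overwhelmed by the near-root errors and the extra boundary term, which carry the same weight \emph{up to} the exploding factor $c_1$; this is precisely why $|x_0|=2$ (not $1$) is needed, since at distance $1$ the boundary term of the tree Carleman inequality would sit at the very location of the good term and defeat it. The remainder is the careful but routine bookkeeping of exponential weights that drives the exponent $\gamma(1+2\beta)R\log R$ down to $R\log R$ as $\beta\to\infty$.
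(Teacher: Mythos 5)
Your proof is correct and follows essentially the same strategy as the paper's: localise $u$ by a radial cut-off at $|x|\approx R$ times a cut-off in the variable $|x|/R+\varphi(t)$, apply the tree Carleman inequality to $g=\theta u$, absorb every error term except the one carried by the radial cut-off (which is supported exactly on the annulus $\lfloor R\rfloor-1\le|x|\le\lfloor R\rfloor+1$ and produces $\lambda(R)$), and compare the exponential weights of the surviving good and bad terms. The only differences are cosmetic parameter choices --- the paper lets $\varphi$ peak at $2+\epsilon^{-1}$ with a second cut-off transitioning over an interval of width $1$ and takes $\alpha=(\epsilon/2+\epsilon^2)R\log R$, where you peak at $\beta+\kappa$ with a transition of width $\kappa$ and $\alpha=\gamma R\log R$ --- and both yield the exponent $1+O(\eta)$ after optimising.
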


\begin{proof} 
For $\epsilon>0$ fixed let us define the following cut-off functions:

--- we define $\theta^R,\mu$ to be $C^\infty(\mathbb{R})$ functions such that $0\le \theta^R,\mu\le 1$
and
\begin{equation}\label{cutoffa}
\theta^R(x)=\left\{\begin{array}{ll}1,&|x|\le R-1\\0,&|x|\ge R\end{array}\right.\quad \mu(x)=\left\{\begin{array}{ll}1,&|x|\ge \epsilon^{-1}+1\\0,&|x|\le \epsilon^{-1}\end{array}\right..
\end{equation}

--- and a $C^\infty([0,1])$ function $\varphi$ such that $0\le \varphi\le 2+\epsilon^{-1}$ and
\begin{equation}\label{cutoffb}
\varphi(t)=\left\{\begin{array}{ll}2+\epsilon^{-1},&t\in[\frac12-\frac18,\frac12+\frac18]\\0,&t\in [0,\frac14]\cup[\frac34,1]
\end{array}\right..
\end{equation}

We are going to apply the previous lemma to
$$
g(x,t):=\theta^R(|x|)\mu\left(\frac{|x|}{R}+\varphi(t)\right)u(x,t),\ \ x\in\tree,\ \ t\in[0,1].
$$

Notice that the evolution of $g$ is given by the expression
\begin{eqnarray*}
(i\partial_t+\mathcal{L})g\!&=&\!
\theta^R\mu\left(\frac{|x|}{R}+\varphi\right)(i\partial_t u+\lll u)
+i\varphi' \theta^R(x)\mu'\left(\frac{|x|}{R}+\varphi\right)u\\
&&+\theta^R(x)\frac{1}{q+1}\sum_{y\sim x}\left(\mu\left(\frac{|y|}{R}+\varphi\right)-\mu\left(\frac{|x|}{R}+\varphi\right)\right)u(y,t)\\
&&+\frac{1}{q+1}\sum_{y\sim x}\left(\theta^R(|y|)-\theta^R(|x|)\right)\mu\left(\frac{|y|}{R}+\varphi\right)u(y,t).
\end{eqnarray*}

Using the bounds on the cut-off functions and the fact that $|i\partial_t u+\lll u|=|Vu|\leq\norm{V}_\infty |u|$ we get
\begin{eqnarray*}
\abs{(i\partial_t+\mathcal{L})g}&\leq&
\norm{V}_\infty |u|+C_\ffi\abs{\mu'\left(\frac{|x|}{R}+\varphi\right)}|u|\\
&&+\frac{1}{q+1}\abs{\sum_{y\sim x}\left(\mu\left(\frac{|y|}{R}+\varphi\right)-\mu\left(\frac{|x|}{R}+\varphi\right)\right)u(y,t)}\\
&&+\frac{1}{q+1}\sum_{y\sim x}\abs{\theta^R(|y|)-\theta^R(|x|)}\abs{u(y,t)}.
\end{eqnarray*}

Thus, by means of the Carleman estimate with $\beta=1/\eps$ and $R$ large enough,
\begin{multline}\label{evolg}
\sinh(2\alpha/R^2)\cosh(4\alpha/\epsilon R)\|e^{\alpha\left(\frac{|x|}{R}+\varphi\right)^2}g\mathbf{1}_{|x|\geq1}\|_{L^2_{x,t}}\\
 \le
c\norm{V}_\infty^2\|e^{\alpha\left(\frac{|x|}{R}+\varphi\right)^2}g\|_{L^2_{x,t}}^2\\
+c\int_0^1\sum_{n\ge0,|x|=n}e^{2\alpha\left(\frac{n}{R}+\varphi\right)^2}\left|\mu'\left(\frac{n}{R}+\varphi\right)\right|^2|u(x,t)|^2\mbox{d}t\\
+c\int_0^1\sum_{n\ge0,|x|=n}\sum_{y\sim x}e^{2\alpha\left(\frac{n}{R}+\varphi\right)^2}\left|\mu\left(\frac{|y|}{R}+\varphi\right)-\mu\left(\frac{|x|}{R}+\varphi\right)\right|^2|u(y,t)|^2\mbox{d}t\\
+c\int_0^1\sum_{n\ge0,|x|=n}\sum_{y\sim x}e^{2\alpha\left(\frac{n}{R}+\varphi\right)^2}
\abs{\theta^R(|y|)-\theta^R(|x|)}\abs{u(y,t)}^2\mbox{d}t\\
+\int_0^1\sinh\frac{4\alpha}{R}\left(\frac{1/2}{R}+\varphi\right)\sum_{|x|=1}e^{2\alpha\left(\frac{1}{R}+\varphi\right)^2}|g(x,t)|^2\mbox{d}t.
\end{multline}
Note that we used Cauchy-Schwarz in the third and fourth sums in the form $\dst\sum_{x\in\tree}\abs{\sum_{y\sim x}\psi(y)}^2
\leq (q+1)\sum_{x\in\tree}\sum_{y\sim x}\abs{\psi(y)}^2$.

We now study carefully the support of each term.

For the first term involving $V$: by taking $\alpha=cR\log R$ with $c\geq\eps/2$
\begin{equation}\label{beha}
\sinh(2\alpha/R^2)\cosh(4\alpha/\epsilon R)\geq 
2cR^{\frac{4c}{\epsilon}-1}\log R,
\end{equation}
so that, when $R$ large enough (depending on $L$ also now),
the term  on the right, up to the term involving root $o$, is absorbed in the left-hand side. Further, the remaining term is bounded by $ce^{2\alpha(2+\epsilon^{-1})}L^2A^2$.

For the term involving the derivative of the function $\mu$, we easily see that $\frac{n}{R}+\varphi\le 1+\epsilon^{-1}$, and, therefore
$$
\int_0^1\sum_{n\ge0,|x|=n}e^{2\alpha\left(\frac{n}{R}+\varphi\right)^2}\left|\mu'\left(\frac{n}{R}+\varphi\right)\right|^2|u(x,t)|^2
\,\mbox{d}t
\le ce^{2\alpha\left(1+\epsilon^{-1}\right)}A^2.
$$

Next we study the term involving the difference of $\mu$ functions, which is similar to the last one. It is easy to check that if $\frac{n}{R}+\varphi\ge \epsilon^{-1}+1+\frac{1}{R}$ both functions $\mu$, the one evaluated at $x$ and the one evaluated at one neighbor of $x$ are 0. Hence,
\begin{multline*}
\int_0^1\sum_{n\ge0,|x|=n}\sum_{y\sim x}e^{2\alpha\left(\frac{n}{R}+\varphi\right)^2}
\left|\mu\left(\frac{|y|}{R}+\varphi\right)-\mu\left(\frac{|x|}{R}+\varphi\right)\right|^2|u(y,t)|^2\,\mbox{d}t\\
\le e^{2\alpha\left(\epsilon^{-1}+1+1/R\right)^2}A^2.
\end{multline*}

Now we focus on the term with difference of $\theta$ functions. In this case, the only possibilities where the difference is not zero are summarize as

--- $|x|=\lfloor R\rfloor -1$ and $y$ a future neighbor, $|y|=\lfloor R\rfloor$.

--- $|x|=\lfloor R\rfloor$ and $y$ any neighbor of $x$.

--- $|x|=\lfloor R\rfloor +1$ and $y$ the past neighbor, $|y|=\lfloor R\rfloor $.\\

Thus,
\begin{multline*}
\int_0^1
\sum_{n\ge0,|x|=n}\sum_{y\sim x}e^{2\alpha\left(\frac{n}{R}+\varphi\right)^2}
\abs{\theta^R(|y|)-\theta^R(|x|)}\abs{u(y,t)}^2\,\mbox{d}t\\
\le c e^{2\alpha\left(3+\epsilon^{-1}+1/R\right)^2}\lambda^2(R).
\end{multline*}

For the last term in the right-hand side, we just bound the function $\varphi$ to put all the functions out of the sum. Now, by the definition of $\theta^R$ and $\mu$, we see that if $x=x_0$ and $t\in[1/2-1/8,1/2+1/8]$ then $\left|\frac{|x_0|}{R}+\varphi e_1\right|=2+\epsilon^{-1}+2/R,$ so the cut-off functions are 1 and $g(x_0,t)=u(x_0,t)$. This allows us to bound the left-hand side of the Carleman inequality of the lemma by
\[
\|e^{\alpha\left(\frac{|x|}{R}+\varphi \right)^2}g\mathbf{1}_{|x|\geq1}\|_{L^2_{x,t}}^2\ge e^{(2+\epsilon^{-1}+2/R)^22\alpha},
\]
since $\int_{1/2-1/8}^{1/2+1/8}|u(x_0,t)|^2\ge 1$. 

Gathering all these results we have,
\begin{multline*}
\sinh\left(\frac{2\alpha}{R^2}\right)\cosh\left(\frac{4\alpha}{\epsilon R}\right)e^{2\alpha(2+\epsilon^{-1}+2/R)^2}\\
\\\le e^{2\alpha(2+\epsilon^{-1})}A^2L^2+\quad e^{2\alpha(1+\epsilon^{-1}+1/R))^2}A^2
\\+\quad\sinh\frac{4\alpha}{R}\left(\frac{1/2}{R}+2+\epsilon^{-1}\right)e^{2\alpha(2+\epsilon^{-1}+1/R)^2}A^2\\
+\quad e^{2\alpha(3+\epsilon^{-1}+1/R)^2}\lambda^2(R).
\end{multline*}

It is clear that the first two terms in the right-hand side are smaller than the third term. Let us see that the third term can be absorbed in the left-hand side, for $R$ large enough, depending on $A$ (recall that before we showed that $R$ depends on $L$ as well) and $\epsilon$, which is a fixed number. Indeed, taking into account that $\alpha=cR\log R$ with $c>\frac{\epsilon}{2}$, we have
\begin{multline*}
\sinh\left(\frac{2\alpha}{R^2}\right)\cosh\left(\frac{4\alpha}{\epsilon R}\right)e^{2\alpha(2+\epsilon^{-1}+2/R)^2}\\
\sim 2c\log RR^{2cR(2+\epsilon^{-1})^2+8c(2+\epsilon^{-1})+4c\epsilon^{-1}-1+8c/R}
\end{multline*}
and
\begin{multline*}
\sinh\frac{4\alpha}{R}\left(\frac{1/2}{R}+2+\epsilon^{-1}\right)e^{2\alpha(2+\epsilon^{-1}+1/R)^2}A^2\\
\sim A^2R^{2cR(2+\epsilon^{-1})^2+8c(2+\epsilon^{-1})+4c/R},
\end{multline*}
which proves our claim.

Finally, we conclude that
\[
1\le 2c\log RR^{\frac{4c}{\epsilon}-1}\le c_{\epsilon}e^{(5+2\epsilon^{-1})2c R \log R-(2+2\epsilon^{-1})2c\log R}\lambda^2(R),
\]
so
$$
\lambda(R)\ge c_{\epsilon}e^{-(5+2\epsilon^{-1})c R \log R+(2+2\epsilon^{-1})c\log R}.
$$

We just finish this result by taking $c=\epsilon/2+\epsilon^2$, to have
\begin{equation}\label{lambdaR}
\lambda(R)\ge c_{\epsilon}e^{-(1+9\epsilon/2+5\epsilon^2) R \log R+(1+3\epsilon+2\epsilon^2)\log R}
\end{equation}
which is of the desired form.
\end{proof}

Once we have the lower bound, since the previous log-convexity properties, {\it i.e.} Proposition \ref{prop:4.3}, derive upper bounds for the term $\lambda(R)$, we are in position to prove Theorem B from the introduction, that is

\begin{theorem}[Uniqueness result]\ \\
Let $u\in C^1([0,1]:\ell^2(\tree))$ be a solution of \eqref{schr} with $V$ a bounded potential. If for $\mu>1$
\[
\sum_{x\in\tree}e^{2\mu|x|\log(|x|+1)}\big(|u(x,0)|^2+|u(x,1)|^2\big)<+\infty,
\]
then $u\equiv0$.
\end{theorem}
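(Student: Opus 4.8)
The plan is to follow the scheme used on $\Z$ in \cite{fbv}: assuming $u\not\equiv0$, confront the \emph{lower} bound for $\lambda(R)$ given by Theorem 5.6 with an \emph{upper} bound for the same quantity extracted from the logarithmic convexity of Proposition 5.2, and reach a contradiction by letting $R\to\infty$.

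First I would place ourselves in the hypotheses of Theorem 5.6. Since $t\mapsto\|u(t)\|_2^2$ is $C^1$ and, using that $\mathcal{L}$ is self-adjoint, $\bigl|\frac{\d}{\d t}\|u(t)\|_2^2\bigr|=\bigl|2\Re\langle iVu,u\rangle\bigr|\le2\|V\|_\infty\|u(t)\|_2^2$, Gronwall's inequality (run in both time directions) shows that if $u\not\equiv0$ then $\|u(t)\|_2$ is bounded below by a positive constant on all of $[0,1]$; in particular $\int_0^1\sum_{x\in\tree}|u(x,t)|^2\,\d t=:A^2<\infty$ and $\int_{1/2-1/8}^{1/2+1/8}\sum_{x\in\tree}|u(x,t)|^2\,\d t>0$, so some vertex $y$ satisfies $\int_{1/2-1/8}^{1/2+1/8}|u(y,t)|^2\,\d t>0$. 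Multiplying $u$ by a scalar (harmless, and still a solution), we may assume this integral is $\ge1$, and since $\tree_q$ is vertex transitive we may re-choose the root $o$ to be at distance $2$ from $y$ and set $x_0:=y$, so that $|x_0|=2$. A bounded change of root leaves $\mathcal{L}$, $V$, $A$ and $\|V\|_\infty$ unchanged and distorts $|x|$ by a bounded amount, so for any fixed $\gamma\in(1,\mu)$ the decay hypothesis still yields $\sum_{x\in\tree}e^{2\gamma|x|\log(|x|+1)}\bigl(|u(x,0)|^2+|u(x,1)|^2\bigr)<\infty$ relative to the new root, the polynomial distortion being absorbed by the gap $\mu-\gamma>0$.

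Next I would fix such a $\gamma$, so that $\|(1+|x|)^{\gamma(1+|x|)}u(t)\|_2<\infty$ at $t=0$ and $t=1$. By Proposition 5.2 this norm is finite for every $t\in[0,1]$; moreover the proof of that proposition establishes, for the weights $e^{\phi_b}$ with $\phi_b(n)=\gamma(1+n)\log^b(1+n)$, the interpolation inequality $H_b(t)\le e^{Ct(1-t)}H_b(0)^{1-t}H_b(1)^t$, so that (using $t(1-t)\le\frac14$, $H_b(0)^{1-t}H_b(1)^t\le\max(H_b(0),H_b(1))$ and Fatou's lemma as $b\to1$) one obtains a bound \emph{uniform} in $t$:
\[
\sup_{t\in[0,1]}\|(1+|x|)^{\gamma(1+|x|)}u(t)\|_2^2\le B^2<\infty.
\]
Since $n\mapsto(1+n)^{2\gamma(1+n)}$ is increasing, restricting the weighted sum to $|x|\ge\lfloor R\rfloor-1$ and integrating in $t$ gives $\lambda(R)^2\le\lfloor R\rfloor^{-2\gamma\lfloor R\rfloor}B^2$, i.e. $\lambda(R)\le B\,e^{-\gamma R\log R+2\gamma\log R}$ for large $R$. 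On the other hand, applying Theorem 5.6 with $L=\|V\|_\infty$, the constant $A$ above, the point $x_0$, and $\eta:=\frac{\gamma-1}{2}>0$ yields $R_0$ and $c>0$ with $\lambda(R)\ge c\,e^{-(1+\eta)R\log R}$ for $R\ge R_0$. Combining the two bounds gives, for $R$ large, $e^{(\gamma-1-\eta)R\log R}\le\frac{B}{c}R^{2\gamma}$, which is impossible since $\gamma-1-\eta=\frac{\gamma-1}{2}>0$ forces the left-hand side to outgrow any power of $R$. Hence $u\equiv0$.

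The one genuinely non-mechanical point is extracting from the proof of Proposition 5.2 a bound on $\|(1+|x|)^{\gamma(1+|x|)}u(t)\|_2$ that is uniform in $t\in[0,1]$ rather than merely finite for each fixed $t$ (the stated conclusion only gives the latter); checking that the normalisation and the change of root meet the hypotheses of Theorem 5.6 is routine once constants are tracked. Beyond Lemma 5.1, Proposition 5.2 and Theorem 5.6 no new ingredient is needed, and the argument is the exact tree counterpart of the one on $\Z$ in \cite{fbv}.
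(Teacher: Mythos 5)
Your proposal is correct and follows essentially the same route as the paper: normalise so that some $x_0$ at distance $2$ from a (re-chosen) root satisfies $\int_{1/2-1/8}^{1/2+1/8}|u(x_0,t)|^2\,\d t\ge 1$, then play the lower bound of Theorem 5.6 against the upper bound on $\lambda(R)$ coming from the uniform-in-$t$ weighted estimate implicit in Proposition 5.2, and let $R\to\infty$. Your treatment is in fact slightly more careful than the paper's at the two points you flag (the Gronwall argument guaranteeing a nonzero time-slice, and absorbing the polynomial distortion from the change of root by passing to $\gamma<\mu$), but these are refinements of the same argument, not a different one.
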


\begin{proof} Let $\eta>0$ be such that $\mu>1+\eta>1$.

If $u$ is not zero, after eventually changing the root of the tree and multiplying $u$ by a constant, we may assume that 
there is an $x_0\in\tree$ with $|x_0|=2$ such that $u$ satisfies
\[
\int_{1/2-1/8}^{1/2+1/8}|u(x_0,t)|^2\,dt\ge1.
\]
We can then apply the previous theorem to obtain a lower bound for $\lambda(R)$. More precisely, we know that $\lambda(R)$ satisfies $\eqref{lambdaR}$. On the other hand, by Proposition \ref{prop:4.3} we have
\[
\sup_{t\in[0,1]}\sum_{x\in{\tree}}|u(x,t)|^2e^{2\mu |x|\log|x|}<+\infty.
\]

Hence $\lambda(R)\le ce^{-\mu R\log R }$. Combining both bounds,
$$
ce^{-\mu R\log R }\ge \lambda(R)\ge ce^{-(1+\eta) R \log R}.
$$
We get a contradiction letting $R\to\infty$.
\end{proof}


\section{Other infinite graphs}
\label{other}

Let $G=(\mathcal{E},\mathcal{V})$ be an infinite graph and $\mathcal{L}$ be the associated combinatorial Laplacian.
Assume that $G$ is such that $\mathcal{L}$ has a finitely supported eigenfunction $e_\lambda$\,:
$\mathcal{L}e_\lambda=\lambda e_\lambda$. In this case, the solution of
$$
i\partial_t u(x,t)=\mathcal{L}u(x,t),\ u(x,0)=e_\lambda(x)
$$
is given by $u(x,t)=e^{-i\lambda t}e_\lambda(x)$.  This solution is thus finitely supported at all times. 
In particular, no analogue of Theorems A and C can hold.

Examples of graphs where this may happen are the Diestel-Leader graphs introduced in \cite{DL}. Recall that those are defined as follows:

\begin{definition}
Let $q,r\geq 2$. In $\tree_q$ (resp $\tree_r$) we fix a geodesic ray $\omega$ (resp $\omega'$)
and write $h=h_\omega$ (resp. $h=h_{\omega'}$) for the associated Busemann function.
The \emph{Diestel-Leader graph} $DL(p,q)$ is
$$
DL(q,r)=\{(x,y)\in\tree_q\times\tree_r\,: h(x)+h(y)=0\}
$$
and neighbourhood is given by $(x,y)\sim(x',y')$ if $x\sim x'$ and $y\sim y'$.
\end{definition}

This graph is regular of degree $q+r$. Bartholdi and Woess \cite[Theorem 3.15]{BW} have shown that $L^2\bigl(DL(q,r)\bigr)$
has an orthonormal basis of finitely supported eigenfunctions of $\mathcal{L}$.

Quint \cite{Qu} and Taplyaev \cite{Te} have respectively shown that on the Pascal graph and the Sierpi\'nski graphs there
also exists finitely supported eigenfunctions of $\mathcal{L}$. Of course, on trees, there are no non-zero finitely supported 
eigenfunctions of the laplacian.

\smallskip

Let us now turn to non-homogeneous trees and prove the following:

\begin{proposition}
Let $(\omega_n)$ be a sequence of positive real numbers with $\omega_n\to 0$. Then there exists a rooted tree $\tree$ such that $\mathcal{L}$
has an eigenvector with $|e(x)|\leq C\omega_{|x|}$ for some $C>0$. In particular, if $u$ is the solution of 
$$
i\partial_t u(x,t)=\mathcal{L}u(x,t),\ u(x,0)=e(x)
$$
then $|u(x,t)|\leq C\omega_{|x|}$ for every $t\geq 0$.
\end{proposition}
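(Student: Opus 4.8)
The plan is to construct a \emph{spherically symmetric} non-homogeneous rooted tree and to exhibit on it a radial function in the kernel of the adjacency operator, which is precisely an eigenvector of $\mathcal{L}$ for the eigenvalue $1$. Fix integers $q_0,q_1,q_2,\dots\ge1$, to be specified below, and let $\tree$ be the rooted tree in which the root $o$ has $q_0$ children and every vertex at distance $n\ge1$ from $o$ has exactly $q_n$ children, hence degree $q_n+1$. Since $\mathcal{L}e(x)=e(x)-(\deg x)^{-1}\sum_{y\sim x}e(y)$, the relation $\mathcal{L}e=e$ is equivalent to $\sum_{y\sim x}e(y)=0$ for every $x\in\tree$. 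Looking for $e$ of the form $e(x)=f(|x|)$, this reads $q_0f(1)=0$ at the root and $f(n-1)+q_nf(n+1)=0$ for $n\ge1$ (a vertex at level $n$ has one parent at level $n-1$ and $q_n$ children at level $n+1$). Taking $f(0)=1$, the recursion forces $f(2k+1)=0$ for all $k\ge0$ and
\[
f(2k)=\frac{(-1)^k}{q_1q_3\cdots q_{2k-1}},\qquad k\ge1 .
\]

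Next I would choose the branching numbers so that $f$ decays below $(\omega_n)$ while $e$ remains square summable. Set $q_0=2$ and $q_{2j}=1$ for $j\ge1$, and pick $q_{2j-1}\in\N$ recursively so that $q_{2j-1}\ge2$ and $q_1q_3\cdots q_{2j-1}\ge\omega_{2j}^{-1}$; this is always possible, since with $q_1,\dots,q_{2j-3}$ already fixed it only requires $q_{2j-1}\ge(\omega_{2j}\,q_1q_3\cdots q_{2j-3})^{-1}$. Then $|f(2k)|=(q_1q_3\cdots q_{2k-1})^{-1}\le\omega_{2k}$ for $k\ge1$, $|f(2k+1)|=0$, and $|f(0)|=1$, whence $|e(x)|\le C\omega_{|x|}$ with $C=\max\{1,\omega_0^{-1}\}$, and $e\ne0$ since $e(o)=1$. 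Moreover the even shells do not branch, so the shell $\{|x|=2k\}$ has $q_0q_1q_3\cdots q_{2k-1}=2\,q_1q_3\cdots q_{2k-1}$ vertices and contributes $2(q_1q_3\cdots q_{2k-1})^{-1}\le2^{1-k}$ to $\|e\|_2^2$, so $e\in\ell^2(\tree)$. Consequently $u(x,t):=e^{-it}e(x)$ lies in $C^1([0,\infty):\ell^2(\tree))$, solves $i\partial_tu(x,t)=\mathcal{L}u(x,t)$ with $u(\cdot,0)=e$, and satisfies $|u(x,t)|=|e(x)|\le C\omega_{|x|}$ for all $t\ge0$, as asserted.

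There is no real analytic difficulty here; everything hinges on the choice of eigenvalue. Taking the eigenvalue equal to $1$ turns the generic three-term recurrence for a radial eigenfunction into the one-step relation $f(n+1)=-f(n-1)/q_n$, whose solution can be made to decay as fast as one pleases simply by enlarging the \emph{integer} branching numbers $q_1,q_3,\dots$; for a generic eigenvalue one would instead have to solve for non-integer branching numbers, which is impossible. The only point requiring some care is keeping $e$ in $\ell^2(\tree)$ --- handled above by making the even shells non-branching --- so that $u$ is a bona fide strong solution; if one is content with a pointwise solution this restriction is unnecessary and the $q_{2j}$ may be taken arbitrary.
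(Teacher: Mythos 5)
Your proof is correct and follows essentially the same route as the paper: a spherically symmetric tree, a radial eigenfunction for the eigenvalue $1$ (equivalently, a radial function annihilated by the adjacency operator), and the two-step recursion $f(n+1)=-f(n-1)/q_n$ whose decay is forced by choosing large odd-level branching numbers. Your additional step of taking $q_{2j}=1$ to keep $e\in\ell^2(\tree)$ is a small but worthwhile refinement that the paper's proof omits.
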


Here $|x|$ means of course the distance to the root of $\tree$.

\begin{remark}
This does not mean that if $\tilde \omega_n=o(\omega_n)$ and $u$ is a solution of $i\partial u(x,t)=\mathcal{L}u(x,t)$
such that $|u(x,t_i)|=O(\tilde \omega_n)$ at times $t_0=0$ and $t_1=1$ then $u=0$.

For instance, for the homogeneous tree $\tree_q$, the construction below provides us with an eigenvector
for which $\omega_n=q^{-n/2}$. On the other hand, Corollary B shows that decrease rate
at which the only solution is $0$ is $\tilde \omega_n=e^{-\mu n\log(n+1)}$, $\mu>1$.

For the trees constructed below, we do not know what the maximal rate of decrease is, if such a maximal rate exists.
\end{remark}

\begin{proof} The tree we consider is a rapidly branching tree as introduced by Fujiwara \cite{Fu}.
Let $d_n$ be a sequence of integers with $d_n\geq 2$
and construct the tree recursively. We start with the root $o$.
We link $o$ to $d_0$ vertices. Each of these vertices is then linked to $d_1-1$ further vertices,... We thus construct a tree
such the vertices at distance $n$ from the root have degree $d_n$.

Next, we look for a radial eigenvector $e$ of $\mathcal{L}$ with eigenvalue $1$. For simplicity of notation, we
write $e(x)=e(|x|)$.
Those are constructed in \cite{Fu} but for sake of completeness, we reproduce the construction here.
Then, $\mathcal{L}e(x)=e(x)$ reads

--- if $x=0$, $e(0)-e(1)=e(0)$ thus $e(1)=0$

--- if $|x|=n\geq 1$, $\dst e(n)-\frac{1}{d_n}\bigl(e(n-1)+(d_{n}-1)e(n+1)\bigr)=e(n)$ thus
$e(n+1)=\dst-\frac{1}{d_n-1}e(n-1)$.

It follows that $e(n)=0$ if $n$ is odd and, if $n=2p\geq 2$,
$$
e(2p)=(-1)^p\left(\prod_{k=1}^p\frac{1}{d_{2k-1}-1}\right)e(0).
$$
It is then easy to inductively construct the $d_{2k-1}$'s in order to have $\prod_{k=1}^p(d_{2k-1}-1)\geq \omega_{2n}^{-1}$
and the corresponding $e$ is the eigenvector we are looking for.
\end{proof}

\section*{Acknowledgments}
The first author kindly acknowledge financial support from the IdEx postdoctoral program via the PDEUC project and from ERCEA Advanced Grant 2014 669689 - HADE.

The second author kindly acknowledge financial support from the French ANR program, ANR-12-BS01-0001 (Aventures), 
the French-Tunisian CMCU/UTIQUE project 32701UB Popart.

Both authors acknowledge the financial support of the Austrian-French AMADEUS project 35598VB - ChargeDisq.

This study has been carried out with financial support from the French State, managed
by the French National Research Agency (ANR) in the frame of the Investments for
the Future Program IdEx Bordeaux - CPU (ANR-10-IDEX-03-02).  

The authors wish to thank the anounymous referees for there helpfull comments.

\end{document}